\def\NN{{\mathbb N}}
\newcommand{\FDer}[1]{\stackrel{#1}{\to}}
\newcommand{\Fil}[1]{$C$-\textrm{filtered} }
\newcommand{\cal}{\mathcal}
\def\cT{{\cal T}}
\def\cV{{\cal{V}}}
\def\cK{{\cal K}}
\def\cV{{\cal V}}
\def\Der{\hbox{{\rm{Der}}}}
\def\Ker{\hbox{{\rm{Ker}}}}
\def\Ht{\hbox{{\rm{ht}}}}
\def\Ext{{\hbox{\rm{Ext}}}}
\def\Ass{{\hbox{\rm {Ass}}}}
\def\Hom{{\hbox{\rm{Hom}}}}
\def\Dim{{\hbox{\rm{dim}}}}
\def\Coker{{\hbox{\rm{Coker}}}}
\def\Im{{\hbox{\rm{Im}}}}
\def\Ht{{\hbox{ht}}}
\def\Ann{{\hbox{\rm{Ann}}}}
\def\InjDim{{\hbox{\rm{inj.dim}}}}
\def\Supp{{\hbox{\rm{Supp}}}}
\def\Spec{{\hbox{\rm{Spec}}}}
\def\Length{{\hbox{\rm{length}}}}
\newtheorem{Teo}{Theorem}[section]
\newtheorem{Lemma}[Teo]{Lemma}
\newtheorem{Prop}[Teo]{Proposition}
\newtheorem{Cor}[Teo]{Corollary}
\theoremstyle{definition}
\newtheorem{Def}[Teo]{Definition}
\newtheorem{Ex}[Teo]{Example}
\begin{document}

\subjclass{Primary 13D45}
\title[Local cohomology modules of polynomial or power series rings]{Local cohomology modules of polynomial or power series rings over rings of small dimension}
\author{Luis N\'u\~nez-Betancourt}
\address{
Department of Mathematics,
University of Michigan, 
Ann Arbor, MI 48109, USA.
}
\email{luisnub@umich.edu}

\begin{abstract}
Let $A$ be a ring and $R$ be a polynomial or a power series ring over $A$. 
When $A$ has dimension zero,
we show that the Bass numbers and the associated primes of the local cohomology modules over $R$ are finite.
Moreover, if $A$ has dimension one and $\pi$ is an nonzero divisor, then the same properties hold for prime ideals that contain $\pi.$
These results do not require that $A$ contains a field. 
As a consequence, we give a different proof for the finiteness properties of local cohomology over unramified regular local rings. 
In addition, we extend previous results on the injective dimension of local cohomology modules over certain regular rings of mixed characteristic.
\end{abstract}

\maketitle
\section{Introduction}\label{1}
Throughout this manuscript, $A$ and $R$ denote commutative Noetherian rings with unity such that $R$ is either a polynomial ring, 
$A[x_1,\ldots,x_n]$, or a power series ring, $A[[x_1,\ldots,x_n]]$. 
If $M$ is an $R$-module and $I\subset R$ is an ideal, we denote the $i$-th local
cohomology of $M$ with support in $I$ by $H^i_I (M)$. If $I$ is generated by the elements $f_1,\ldots, f_\ell \in R$, these cohomology
groups can be computed using the $\check{\mbox{C}}$ech complex,

$$
0\to M\to \oplus_j M_{f_j}\to \ldots \to M_{f_1 \cdots f_\ell} \to 0.
$$

The structure of these modules has been widely studied 
by several authors. Among the results obtained, one encounters the following finiteness properties for certain 
regular rings:

\begin{itemize}
\item[(1)] \quad The set of associated primes of $H^i_I(R)$ is finite,
\item[(2)] \quad the Bass numbers of $H^i_I(R)$ are finite, and
\item[(3)] \quad $\InjDim_R H^i_I (R)\leq \Dim\Supp_R H^i_I(R)$.
\end{itemize}

Huneke and Sharp showed those properties for regular local rings of characteristic $p>0$ \cite{Huneke}. Lyubeznik proved these finiteness properties
for regular local rings of equal characteristic zero and finitely generated regular algebras over a field of characteristic zero
\cite{LyuDMod}.
It is conjectured that these properties also hold for regular local rings of mixed characteristic \cite{LyuDMod}.
We point out that these properties are not necessarily true in general; see \cite{Moty,Anurag,S-S} for counterexamples
of ($1$) and \cite{Ha} for a counterexample of ($2$). If $R$ is not a zero dimensional Gorenstein ring,
$R=H^0_0(R)$ is a counterexample for ($3$).

Properties ($1$), ($2$) and ($3$) have been proved for a larger family of functors introduced by Lyubeznik  \cite{LyuDMod}.
If $Z \subset \Spec ( R)$ is a closed subset and $M$ is an $R$-module, we denote by 
$H^i_Z (M)$
the $i$-th local cohomology module of $M$ with support in $Z$. We note that
$H^i_Z (R)=H^i_I (R)$,
where $Z=\cV(I)=\{P\in\Spec(R): I\subset P\}$.
For two
closed subsets of $\Spec(R)$, $Z_1\subset Z_2$, there is a long exact sequence of functors
\begin{equation}\label{LC2}
\ldots\to H^i_{Z_1}\to H^i_{Z_2}\to H^i_{Z_1/Z_2}\to \ldots
\end{equation}

A \emph{Lyubeznik functor}, $\cT$, is any functor of the form $\cT =\cT_1\circ \dots\circ\cT_t$, where every functor $\cT_j$
is either $H_{Z_1},$ $H^i_{Z_1\setminus Z_2},$ or the kernel, image, or cokernel of some arrow in the previous long exact sequence
for closed subsets $Z_1,Z_2$ of $\Spec(R)$ such that $Z_2\subset Z_1$.  We point out that $H^i_{Z_1/Z_2}$ was previously introduced 
(cf. \cite{RD}).

The aim of this manuscript is to extend Properties ($1$) and ($2$) for certain rings that are not necessarily regular or that do not necessarily contain a field. Namely,

\begin{Teo} \label{TeoZero}
Let $A$ be a zero dimensional commutative  Noetherian ring.
Let $R$ be either $A[x_1,\ldots,x_n]$ or $A[[x_1,\ldots,x_n]]$.
Then, 
\begin{itemize}
\item $\Ass_R \cT(R)$ is finite for every functor $\cT$, and
\item the Bass numbers of  $\cT(R)$ are finite.
\end{itemize}
In particular, these properties hold for $H^i_I(R)$ for every ideal $I\subset R$ and every integer $i\in \NN$.  
\end{Teo}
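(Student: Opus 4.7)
The plan is to reduce, via the product decomposition of Artinian rings, to the case in which $A$ is local Artinian, and then to filter $R$ by the submodules $\mathfrak{m}^i R$, with $\mathfrak{m}$ the maximal ideal of $A$. The successive quotients of this filtration are finite direct sums of copies of $\bar R := R/\mathfrak{m}R$, a polynomial or power series ring over the residue field $k = A/\mathfrak{m}$; this is precisely the regular, equicharacteristic setting where finiteness of associated primes and Bass numbers for Lyubeznik functors is already known by Huneke--Sharp \cite{Huneke} and Lyubeznik \cite{LyuDMod}.

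A zero-dimensional Noetherian ring is Artinian, so $A = A_1 \times \cdots \times A_s$ with each $A_i$ local Artinian. The polynomial and power series constructions commute with finite products, giving $R = R_1 \times \cdots \times R_s$. Closed subsets of $\Spec R$ decompose as disjoint unions of closed subsets of the $\Spec R_i$, and every $R$-module splits accordingly along the orthogonal idempotents, so any Lyubeznik functor on $R$ splits into Lyubeznik functors on the $R_i$. One may therefore assume $A$ is local Artinian with $\mathfrak{m}^N = 0$ for some $N$. Because $R$ is $A$-flat, each quotient of the filtration
\[
0 = \mathfrak{m}^N R \subset \mathfrak{m}^{N-1} R \subset \cdots \subset \mathfrak{m} R \subset R
\]
is isomorphic to $(\mathfrak{m}^i/\mathfrak{m}^{i+1}) \otimes_A R \cong \bar R^{\lambda_i}$, where $\lambda_i := \dim_k \mathfrak{m}^i/\mathfrak{m}^{i+1}$ is finite.

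For the base step one shows that $\cT(\bar R^\lambda)$ has finite associated primes and Bass numbers over $R$. Since each local cohomology functor is computable from a $\check{\mbox{C}}$ech complex depending only on the module and the generators of the defining ideal, and since $\bar R^\lambda$ is annihilated by $\mathfrak{m}R$, the $R$-module $\cT(\bar R^\lambda)$ is naturally identified with the corresponding Lyubeznik functor applied over the regular ring $\bar R$. The cited results give finite $\Ass_{\bar R}$ and finite Bass numbers over $\bar R$. Associated primes transfer to $R$ trivially, since any prime of $R$ in the support of an $\bar R$-module contains $\mathfrak{m}R$ and such primes biject with primes of $\bar R$. For Bass numbers, one invokes the change-of-rings spectral sequence
\[
E_2^{p,q} = \Ext^p_{\bar R_{\bar P}}\!\bigl(\Tor_q^{R_P}(\kappa(P), \bar R_{\bar P}),\, \cT(\bar R^\lambda)_P\bigr) \Longrightarrow \Ext^{p+q}_{R_P}\!\bigl(\kappa(P), \cT(\bar R^\lambda)_P\bigr),
\]
and notes that $\Tor_q^R(\kappa(P), \bar R) \cong \Tor_q^A(\kappa(P), k)$ by flat base change; each such $\Tor_q^A$ is a finite-dimensional $k$-vector space, so at total degree $i$ only finitely many nonzero contributions appear and each is finite.

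The inductive step is to apply $\cT$ to the short exact sequence $0 \to \mathfrak{m}^{i+1} R \to \mathfrak{m}^i R \to \bar R^{\lambda_i} \to 0$ and to propagate finiteness through the resulting long exact sequence, using the long exact sequence of $\Ext$ to bound Bass numbers of extensions by those of the outer terms. Descending from $i = N-1$ to $i = 0$ then yields the conclusion for $\cT(R)$. The main technical obstacle is to verify that a Lyubeznik functor sends a short exact sequence of $R$-modules to a long exact sequence: each individual local cohomology factor does so as a right-derived functor, but the full composition involves kernels, images, and cokernels of connecting maps, and tracking finiteness through the associated snake-lemma steps in the definition requires care. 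This bookkeeping is essentially built into the formalism of Lyubeznik functors as in \cite{LyuDMod}, and is the key delicate point of the proof.
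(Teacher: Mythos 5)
Your reduction to a local Artinian base and the filtration of $R$ by $\mathfrak{m}^iR$ with layers $\bar R^{\lambda_i}$ parallels the paper's starting point (the paper filters by $N_iR_f$ coming from a composition series of $A$, as in Lemma \ref{LemmaFLDZ} and Lemma \ref{LemmaFil}), and your base-case transfer of Bass numbers from $\bar R$ to $R$ is the same in spirit as the Koszul comparison in Proposition \ref{PropBDZ}. But the inductive step has a genuine gap, in two places. First, even granting a long exact sequence, what you must control for the middle term $\cT(\mathfrak{m}^iR)$ are the \emph{images} of the connecting maps, i.e.\ subquotients of $\cT(\mathfrak{m}^{i+1}R)$ and of $\cT(\bar R^{\lambda_i})$ as plain $R$-modules --- and neither finiteness of associated primes nor finiteness of Bass numbers passes to quotients or subquotients of $R$-modules in general (a quotient of $R$ itself can have any prescribed associated primes, and Hartshorne's example recalled in Section \ref{4} is exactly a local-cohomology subquotient of modules with finite Bass numbers whose Bass numbers are infinite). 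So ``propagate finiteness through the long exact sequence'' does not go through at the level of $R$-modules. Second, the point you flag and defer --- that a Lyubeznik functor turns a short exact sequence of coefficients into a long exact sequence --- is not in fact ``built into the formalism'': only the individual functors $H^i_Z$ are $\delta$-functors; a composite involving kernels, images, or cokernels of the arrows in the sequence (\ref{LC2}) is not, so the long exact sequence you want to induct along need not exist.

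What repairs both defects, and is precisely the paper's missing ingredient in your write-up, is the $D(R,A)$-module structure. All modules and maps in sight are $D(R,A)$-linear, and the paper works in the subcategory $C(R,A)$, which is closed under subobjects, quotients, extensions, and under every $\cT$. For associated primes it shows $R_f$ has finite length as a $D(R,A)$-module (Lemma \ref{LemmaFLDZ}, using exactly your filtration plus the equicharacteristic finite-length results over $R/\mathfrak{m}R$), so $\cT(R)$ has finite $D$-length and Lemma \ref{LemmaAssFL} gives finiteness of $\Ass$; finite $D$-length, unlike finite $\Ass$, \emph{is} inherited by all the subquotients your argument produces. For Bass numbers it shows every object of $C(R,A)$ is $C$-filtered (Lemma \ref{LemmaFil}) with layers killed by a maximal ideal of $A$ and lying in $C(R/\mathfrak{m}R,A/\mathfrak{m})$, and then transfers Ext-finiteness across $R_P\to \bar R_P$ (Proposition \ref{PropBDZ}); this replaces your induction along a long exact sequence that is not available. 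Without introducing the differential-operator category (or some other structure whose finiteness notion is stable under the relevant subquotients), your d\'evissage cannot be completed.
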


\begin{Teo} \label{TeoOne}
Let $A$ be a  one-dimensional ring, and 
$R$ be either $A[x_1,\ldots,x_n]$ or $A[[x_1,\ldots,x_n]].$ 
Let $\pi\in A$ denote an element such that $\dim(A/\pi A)=0.$ 
Then, 
the set of associated primes over $R$ of $\cT (R)$ that contain 
$\pi$ is finite for every functor $\cT$. Moreover, if $A$ is Cohen-Macaulay and $\pi$ is a nonzero divisor,
then the Bass numbers of  $\cT(R)$, with respect to a prime ideal $P$ that contains $\pi$, are finite.
In particular, these properties hold for $H^i_I(R)$ for every ideal $I\subset R$ and every integer $i\in \NN$.  
\end{Teo}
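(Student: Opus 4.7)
The plan is to reduce both assertions to Theorem~\ref{TeoZero} applied to $\bar R := R/\pi R$, which is a polynomial or power series ring over $\bar A := A/\pi A$; since $\Dim \bar A = 0$ by hypothesis, Theorem~\ref{TeoZero} is at our disposal over $\bar R$. For any $R$-module $M$, if $P \in \Ass_R M$ contains $\pi$ and $P = \Ann_R(x)$, then $\pi x = 0$, so $x \in (0 :_M \pi) = \Hom_R(\bar R, M)$; hence $P \in \Ass_R(0 :_M \pi) = \Ass_{\bar R}(0 :_M \pi)$ under the bijection between primes of $R$ containing $\pi$ and primes of $\bar R$. Thus the first claim reduces to showing $\Ass_{\bar R}(0 :_{\cT(R)} \pi)$ is finite. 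In the Cohen--Macaulay, nonzero-divisor setting, $\bar R$ has projective dimension one over $R$ locally at any $P \supset \pi$, so the change-of-rings spectral sequence
\begin{equation*}
\Ext^p_{\bar R_{\bar P}}\bigl(k(P),\ \Ext^q_{R_P}(\bar R_{\bar P}, \cT(R)_P)\bigr) \Rightarrow \Ext^{p+q}_{R_P}\bigl(k(P), \cT(R)_P\bigr),
\end{equation*}
with $\bar P := P/\pi R$, reduces finiteness of Bass numbers of $\cT(R)$ at $P$ to finiteness (over $\bar R$ at $\bar P$) of Bass numbers of $(0 :_{\cT(R)} \pi) = \Ext^0_R(\bar R, \cT(R))$ and $\cT(R)/\pi \cT(R) = \Ext^1_R(\bar R, \cT(R))$.

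The main remaining task is to establish that, for every Lyubeznik functor $\cT$ on $R$, both $(0 :_{\cT(R)} \pi)$ and $\cT(R)/\pi \cT(R)$ admit a finite filtration whose successive quotients have the form $\cT'(\bar R)$ for some Lyubeznik functor $\cT'$ on $\bar R$; applying Theorem~\ref{TeoZero} to each such quotient then concludes the proof. We proceed by induction on the number of constituent functors of $\cT = \cT_1 \circ \cdots \circ \cT_t$. The base case $\cT = \mathrm{id}$ is immediate: $R/\pi R = \bar R$, and $(0 :_R \pi) = (0 :_A \pi) \cdot R$ is finitely generated over $\bar R$. For $\cT_j = H^i_{Z_1 / Z_2}$, when $\pi$ is a nonzero divisor on $R$, the exact sequence $0 \to R \xrightarrow{\pi} R \to \bar R \to 0$ yields a long exact sequence
\begin{equation*}
\cdots \to H^{i-1}_{Z_1/Z_2}(\bar R) \to H^i_{Z_1/Z_2}(R) \xrightarrow{\pi} H^i_{Z_1/Z_2}(R) \to H^i_{Z_1/Z_2}(\bar R) \to \cdots,
\end{equation*}
realizing $(0 :_{H^i_{Z_1/Z_2}(R)} \pi)$ and $H^i_{Z_1/Z_2}(R)/\pi H^i_{Z_1/Z_2}(R)$ as a cokernel and a kernel built from these terms. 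Lyubeznik's independence of base identifies $H^j_{Z_1/Z_2}(\bar R) = H^j_{\bar Z_1 / \bar Z_2}(\bar R)$, with $\bar Z_k := Z_k \cap \cV(\pi) \subseteq \Spec \bar R$, so these pieces are themselves Lyubeznik-functor images on $\bar R$. When $\pi$ is a zerodivisor (relevant only for the associated-primes claim), the exact sequence $0 \to H^0_{(\pi)}(R) \to R \to R/H^0_{(\pi)}(R) \to 0$, with $H^0_{(\pi)}(R) = H^0_{(\pi)}(A) \cdot R$ finitely generated over $R$ by Noetherianity of $A$, reduces the analysis to the quotient, on which $\pi$ is regular. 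For the kernel/image/cokernel steps of $\cT$, the snake lemma applied to the left-exact functor $(0 :_- \pi)$ and the right-exact functor $- \otimes_R \bar R$ combines the inductive filtrations layer by layer.

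The principal obstacle is this inductive step: orchestrating the filtration so that, through each of the four allowed operations (kernel, image, cokernel, and $H^i_{Z_1/Z_2}$), the successive quotients remain of the required form. The key structural observation that makes this possible is that the class of modules admitting such a filtration is closed under kernels, images, and cokernels of morphisms as well as under the connecting-homomorphism pieces of long exact sequences---precisely because these are the operations used to build Lyubeznik functors on $\bar R$ in the first place.
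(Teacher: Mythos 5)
Your overall strategy (pass to the $\pi$-torsion $(0:_{\cT(R)}\pi)$ and the quotient $\cT(R)/\pi\cT(R)$, settle these over $\bar R=R/\pi R$ using the dimension-zero case, and transfer Bass numbers back to $R$) is the same as the paper's, and your change-of-rings spectral sequence is a perfectly good substitute for Lemmas \ref{LemmaBCM} and \ref{LemmaBassQ} (it even makes the Cohen--Macaulay hypothesis unnecessary for that step, since only $\pd_R\bar R\leq 1$ is used). The genuine gap is in the main remaining task: the claim that $(0:_{\cT(R)}\pi)$ and $\cT(R)/\pi\cT(R)$ admit finite filtrations whose successive quotients are of the form $\cT'(\bar R)$ for Lyubeznik functors $\cT'$ on $\bar R$, justified by the assertion that the class of modules with such filtrations is ``closed under kernels, images, and cokernels of morphisms.'' That closure statement is unjustified and, as stated, false: in the definition of a Lyubeznik functor, kernels, images and cokernels are taken only of the specific arrows in the long exact sequence relating $H^i_{Z_1}$, $H^i_{Z_2}$, $H^i_{Z_1/Z_2}$ of one module, not of arbitrary maps, so an arbitrary submodule or quotient of $H^i_{\bar I}(\bar R)$ need not be of the form $\cT'(\bar R)$ nor filtered by such. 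The problem already appears at your base case $\cT=H^i_{Z}$: from the sequence $0\to R\FDer{\pi}R\to\bar R\to 0$ one gets that $(0:_{H^i_Z(R)}\pi)$ is the cokernel of the $R$-module map $H^{i-1}_Z(R)\to H^{i-1}_{\bar Z}(\bar R)$ and $H^i_Z(R)/\pi H^i_Z(R)$ is a submodule of $H^i_{\bar Z}(\bar R)$; these are subquotients of Lyubeznik-functor values on $\bar R$, not values, and the defect only compounds in the inductive step (and in your zerodivisor reduction, which is left vague).

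The repair is exactly the device the paper uses: replace your class by the subcategory $C(\bar R,\bar A)$, the smallest class of $D(\bar R,\bar A)$-modules containing all $\bar R_f$ and closed under subobjects, extensions, and quotients (hence under all Lyubeznik functors on $\bar R$). With that class, the inductive argument you sketch goes through --- this is Lemma \ref{LemmaAQFL}, whose proof uses precisely your two short exact sequences $0\to\Ann_M(\pi)\to M\to\pi M\to 0$ and $0\to\pi M\to M\to M\otimes_R\bar R\to 0$ together with the fact that the relevant composite is multiplication by $\pi$; the outputs are subquotients and extensions of objects of $C(\bar R,\bar A)$, which is enough because the category is closed under these operations. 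Correspondingly, you cannot simply quote Theorem \ref{TeoZero} (stated for $\cT'(\bar R)$); you need the finiteness of associated primes and of Bass numbers for \emph{all} objects of $C(\bar R,\bar A)$, which is what Propositions \ref{PropAssDZ} and \ref{PropBDZ} provide (via finite $D$-module length and the $C$-filtered structure). With $C(\bar R,\bar A)$ in place of your filtration class, your reduction and your spectral-sequence transfer assemble into a correct proof.
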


We also extend Property ($3$) to regular rings that do not necessarily contain a field. Namely:
\begin{Teo}\label{TeoInj}
Let $(A,m,K)$ be a regular local Noetherian ring and let $R$ be either $A[x_1,\ldots, x_n]$ or $A[[x_1,\ldots, x_n]]$. 
Let $M$ be a  $D(R,A)$-module supported only at $mR$. Then, 
$$
\InjDim_R(M)\leq \dim(A)+\dim(\Supp_R M).
$$
In particular, 
$$\InjDim_R(H^j_\eta (\cT (R)))\leq \dim(A),$$
where $\eta=(m,x_1,\ldots,x_n)R$ and $\cT$ is a Lyubeznik functor.
Moreover, if $R=A[x_1,\ldots, x_n]$, then
$$
\InjDim_R(M)\leq \dim(A)+\dim(\Supp_R M).
$$
for every $D(R,A)$-module, $M$.
\end{Teo}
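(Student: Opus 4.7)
The plan is to use the Grothendieck change-of-rings spectral sequence for the tower $A\to R\to R/mR$, combining the finite global dimension of $A$ with Property~(3) for polynomial and power series rings over a field.

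\textbf{First statement.} Since $M$ is supported only at $mR$, any prime $P\not\supseteq mR$ has $M_P=0$ and contributes nothing to the Bass numbers; fix $P\supseteq mR$, set $S=R/mR$ and $\bar P=P/mR$, and apply
$$
E_2^{p,q}=\Ext^p_S\bigl(S/\bar P,\ \Ext^q_R(S,M)\bigr)\Longrightarrow \Ext^{p+q}_R(R/P,M).
$$
Since $R$ is $A$-flat, base change gives $\Ext^q_R(S,M)=\Ext^q_A(K,M)$, which vanishes for $q>d=\dim(A)$ because $A$ has global dimension $d$. The Koszul realization of $\Ext^q_A(K,M)$ on a regular sequence generating $m$ shows that the $A$-linear operators of $D(R,A)$, in particular the $\partial/\partial x_i$, descend compatibly to the Ext; together with the fact that it is killed by $mR$ (so $K$-linear), this endows $\Ext^q_A(K,M)$ with a $D(S,K)$-module structure whose $S$-support lies in $\Supp_R M$. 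Property~(3) for the equicharacteristic regular ring $S$ \cite{Huneke,LyuDMod} then bounds $\InjDim_S\Ext^q_A(K,M)\leq\dim\Supp_R M$, so $E_2^{p,q}=0$ whenever $q>d$ or $p>\dim\Supp_R M$. The spectral sequence yields $\Ext^i_R(R/P,M)=0$, and hence $\mu^i(P,M)=0$, for all $i>d+\dim\Supp_R M$.

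\textbf{``In particular'' and polynomial case.} The ``in particular'' statement is immediate: $\cT(R)$ is a $D(R,A)$-module by Lyubeznik's results, hence so is $H^j_\eta(\cT(R))$, and the latter is supported only at the single maximal ideal $\eta\supseteq mR$, giving $\dim\Supp=0$ and the bound $\InjDim_R\leq\dim(A)$. For the polynomial case we run the same argument after localizing at $A\setminus p$ for each prime $P\subset R$ with $p:=P\cap A$: the polynomial structure persists, $R_p=A_p[x_1,\ldots,x_n]$ over the regular local $A_p$ of dimension $h:=\Ht(p)\leq d$, and $\mu^i(P,M)=\mu^i(PR_p,M_p)$. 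Setting $S_p:=k(p)[x_1,\ldots,x_n]$, the spectral sequence over $A_p$, the identity $\dim(S_p/\bar Q)=\dim(R/Q)-(d-h)$ for primes $Q$ of $R$ with $Q\cap A=p$, and Property~(3) over $k(p)$ give the support bound $\dim\Supp_{S_p}\Ext^q_{A_p}(k(p),M_p)\leq\dim\Supp_R M-(d-h)$, whence $\mu^i(P,M)=0$ for $i>2h-d+\dim\Supp_R M\leq d+\dim\Supp_R M$.

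\textbf{Main obstacle.} The crux is the structural step of endowing $\Ext^q_A(K,M)$ (and its analogue $\Ext^q_{A_p}(k(p),M_p)$) with a $D(S,K)$-module (respectively $D(S_p,k(p))$-module) structure whose support is controlled by the $R$-support of $M$; both follow from the Koszul realization of the $\Ext$ together with the $A$-linearity of $D(R,A)$, though the dimension-counting in the polynomial case requires care. The polynomial hypothesis in the last statement is essential, since localization of a power series ring at a prime of $A$ need not preserve the power series structure, whereas such localization is precisely what allows reduction to the equicharacteristic setting at each prime.
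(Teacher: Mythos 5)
Your argument is correct in substance but takes a genuinely different route from the paper. The paper proves Lemma \ref{LemmaExt} by induction on $d=\dim(A)$: it kills one parameter $y_d$ at a time, compares Bass numbers over $R_P$ and $(R/y_dR)_P$ via Koszul complexes, and plays the two long exact sequences coming from $0\to\Ann_{M_P}(y_d)\to M_P\to y_dM_P\to 0$ and $0\to y_dM_P\to M_P\to M_P/y_dM_P\to 0$ against the fact that $y_d$ annihilates $\Ext^\ell_{R_P}(K_P,M_P)$; the base case $d=0$ is Lyubeznik's characteristic-free bound $\InjDim\leq\dim\Supp$ for arbitrary $D$-modules over a field \cite{LyuInjDim}. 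You instead collapse the whole regular system of parameters at once: the change-of-rings spectral sequence for $A\to R\to S=R/mR$, together with flatness of $R$ over $A$, reduces everything to the $D(S,K)$-modules $\Ext^q_A(K,M)$, $0\leq q\leq d$, and then to the same theorem of Lyubeznik over $K$. This avoids the element-by-element induction, and your structural step is legitimate: the Koszul differentials are multiplication by elements of $A$, which commute with every operator in $D(R,A)$, and a $D(R,A)$-module killed by $mR$ is a $D(S,K)$-module via the surjection $\rho$ described in Section \ref{2}. The polynomial case is handled, as in Proposition \ref{PropInjDimPol}, by localizing at $p=P\cap A$, so the two proofs agree there in outline.

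Two points need repair, both fixable. First, the injective-dimension input over $S$ must be Lyubeznik's characteristic-free theorem for \emph{arbitrary} $D(S,K)$-modules \cite{LyuInjDim} (exactly the paper's base case), not ``Property (3)'' of \cite{Huneke,LyuDMod}: the modules $\Ext^q_A(K,M)$ are general $D$-modules, not local cohomology modules (nor holonomic or $F$-finite ones), so the results you cite do not literally apply. Second, the identity $\dim(S_p/\bar Q)=\dim(R/Q)-(d-h)$, with $h=\Ht(p)$, is false in general: for $A=K[[t]]$, $R=A[x]$, $Q=(tx-1)R$, $p=(0)$, one has $\dim(S_p/\bar Q)=0$ while $\dim(R/Q)-(d-h)=-1$. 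Fortunately it is also unnecessary: since $\Ext^b_{A_p}(k(p),M_p)$ is an $R_p$-subquotient of a finite direct sum of copies of $M_p$, its support is contained in $\Supp_R M$, so $\dim\Supp_{S_p}\Ext^b_{A_p}(k(p),M_p)\leq\dim\Supp_R M$, and your spectral sequence already gives $\mu^i(P,M)=0$ for $i>h+\dim\Supp_R M\leq d+\dim\Supp_R M$, which is all the theorem asserts; the sharper bound $2h-d+\dim\Supp_R M$ should simply be dropped.
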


Example \ref{ExSharp} shows that the bound given in Theorem \ref{TeoInj} is sharp.
However, to the best of our knowledge,
it is not known if this bound is sharp for local cohomology modules $H^i_I(R)$ if $A$ does not contain a field, even when $\dim(A)=1$.

The manuscript is organized as follows. In Section \ref{2}, we give an overview of $D$-modules. Later, in Section \ref{3}, we study the 
associated primes of $D(R,A)$-modules in the subcategory $C(R,A),$ introduced by Lyubeznik in \cite{LyuFreeChar}. 
In Section \ref{4}, we deal with the Bass 
numbers of $D(R,A)$-modules in $C(R,A)$,  and we prove 
Theorem \ref{TeoZero} and Theorem \ref{TeoOne}. 
In Section \ref{5}, we give a new proof for finiteness properties 
of local cohomology modules over a regular local ring of unramified mixed characteristic (cf. Theorem $1$ in \cite{LyuUMC}).
Finally, in Section \ref{6}, 
we study the injective dimension of $D(R,A)$-modules over a polynomial or a power
series 
ring with coefficients over any regular ring, and we prove Theorem \ref{TeoInj}, 
which generalizes Theorem $5.1$(a) in \cite{Zhou}.

\section{D-modules}
\label{2}
Given two rings, $A$ and $R$ such that $A\subset R$, we denote by $D(R,A)$ the ring of $A$-linear differential operators of $R$. This is the subring of 
$\Hom_A(R,R)$ defined inductively as follows. The differential operators of order zero are 
the morphisms induced by multiplying by  elements in $R$.
An element $\theta \in \Hom_A(R,R)$ is a differential operator of order less than or equal to $k+1$ if $\theta\cdot r -r\cdot\theta$
is a differential operator of order less than or equal to $k$ for every $r\in R=\Hom_R(R,R)$.

We recall that if $M$ is a $D(R,A)$-module, then $M_f$ has the structure of a $D(R,A)$-module  such that, for every $f \in R$, the natural
morphism $M\to M_f$ is a morphism of $D(R,A)$-modules. 
As a result of this, since $R$ is a $D(R,A)$-module, 
$\cT(R)$ is also a $D(R,A)$-module (cf. Examples $2.1$ in \cite{LyuDMod}).

By Theorem $16.12.1$ in \cite{EGA},
if $R=A[[x_1,\ldots,x_n]]$, 
then $$D(R,A)=R\left[\frac{1}{t!} \frac{\partial^t }{\partial x_i ^t} \ | \ t \in \NN, 1 \leq i \leq n \right]\subset \Hom_A(R,R).$$ 
For every ideal $I\subset A,$ there is a natural surjection $$\rho :D(R,A)\to D(R/IR,A/IA).$$
Moreover, if $M$ is a $D(R,A)$-module, then $IM$ is a $D(R,A)$-submodule and the structure of $M/IM$ as a $D(R,A)$-module
is given by $\rho$, i.e. $\delta \cdot v=\rho(\delta) \cdot v$ for all $\delta\in D(R,A)$ and $v\in M/IM$.\\

As in Lyubeznik \cite{LyuFreeChar}, we denote by $C(R,A)$
the smallest subcategory of $D(R,A)$-modules that
contains $R_f$ for all $f\in R$ and that is closed under subobjects, 
extensions, and quotients. In particular, the kernel, image, and cokernel of a morphism of $D(R,A)$-modules
that belong to $C(R,A)$ are also objects in $C(R,A)$. We remark that if $M$ is an object in $C(R,A)$,
then $\cT(M)$ is also an object in this subcategory; in particular, $\cT(R)$ belongs to $C(R,A)$ 
(cf. Lemma $5$ in \cite{LyuFreeChar}).\\

We note that if $R_f$ has finite length in the category of
$D(R,A)$-modules for every $f \in R$ and  $M$ is an object of $C(R,A)$, then $M$
has finite length as a $D(R,A)$-module. As a consequence, $\cT(R)$ would also have finite length.
We recall that, if $A=K$ is a field and $R$ is either $K[x_1,\ldots,x_n]$ or $K[[x_1,\ldots,x_n]]$, then $R_f$ has finite length in the category of
$D(R,K)$-modules for every $f \in R$ \cite{LyuDMod, LyuFreeChar}.

\section{Associated primes of local cohomology}
\label{3}

\begin{Lemma}\label{LemmaAssFL}
Let $A$ and $R$ be  Noetherian rings such that $A\subset R$. Let $M$ be a $D(R,A)$-module of finite length. Then,  $\Ass_R M$ is finite.
\end{Lemma}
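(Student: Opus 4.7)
The plan is to proceed by induction on the length $\ell$ of $M$ as a $D(R,A)$-module. Choosing a simple $D(R,A)$-submodule $N \subseteq M$ (which exists since $M$ has finite $D(R,A)$-length) and applying the standard inclusion $\Ass_R M \subseteq \Ass_R N \cup \Ass_R (M/N)$ to the short exact sequence $0 \to N \to M \to M/N \to 0$ reduces the problem to showing that every simple $D(R,A)$-module has only finitely many associated primes over $R$.

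For the simple case, the key input is that for any ideal $I \subseteq R$, the $I$-torsion submodule $\Gamma_I(M) = \{m \in M : I^n m = 0 \text{ for some } n\}$ is stable under $D(R,A)$. Granting this, if $P \in \Ass_R M$ and $P = \Ann_R(y)$ with $y \in M$, then $y \in \Gamma_P(M)$, so $\Gamma_P(M)$ is a nonzero $D(R,A)$-submodule of $M$, and simplicity forces $\Gamma_P(M) = M$. If $Q \in \Ass_R M$ is another associated prime, say $Q = \Ann_R(x)$, then $P^n x = 0$ for some $n$, so $P^n \subseteq Q$ and hence $P \subseteq Q$; by symmetry $P = Q$. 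Thus a simple $D(R,A)$-module has at most one associated prime, in particular a finite set.

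The main obstacle is verifying the $D(R,A)$-stability of $\Gamma_I(M)$, since $D(R,A)$ contains higher-order operators (notably the divided-power derivations $\frac{1}{t!}\frac{\partial^t}{\partial x_i^t}$ that appear in positive or mixed characteristic, as recalled in the excerpt via Theorem 16.12.1 of EGA). The plan is to prove, by induction on the order $k$ of $\delta \in D(R,A)$, the statement: if $m \in M$ satisfies $I^n m = 0$, then $I^{n+k}\delta(m) = 0$. The base case $k=0$ (where $\delta$ is multiplication by an element of $R$) is immediate. For the inductive step I use the commutation identity $r \delta(m) = \delta(rm) - [\delta, r](m)$ for $r \in R$, in which $[\delta, r]$ has order at most $k-1$ by the very definition of order. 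Iterating this over a product of $n+k$ elements of $I$ and running a secondary induction on $n$ reduces every resulting monomial either to $\delta$ applied to an element killed by a smaller power of $I$ (handled by the induction on $n$) or to a lower-order operator applied to $m$ (handled by the induction on $k$); both families of terms vanish by the inductive hypotheses, which establishes the key lemma and closes the proof.
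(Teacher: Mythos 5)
Your proof is correct, but it is organized differently from the paper's. The paper does not pass to simple $D(R,A)$-modules: it iteratively chooses a maximal element $P_j$ of the set of associated primes of the successive quotients, uses that $N_j=H^0_{P_j}(M_j)$ is a nonzero $D(R,A)$-submodule whose only associated prime is $P_j$ (maximality of $P_j$ gives uniqueness), and lets the finite $D(R,A)$-length force the chain of quotients $M_{j+1}=M_j/N_j$ to terminate, so that $\Ass_R M\subseteq\{P_1,\dots,P_\ell\}$; the $D$-stability of $H^0_{P}(-)$ is taken as known, since it is the kernel of the $D$-linear map $M\to\oplus_j M_{f_j}$ using the standard $D$-module structure on localizations recalled in Section \ref{2}. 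You instead take a composition series of $M$ as a $D(R,A)$-module, use $\Ass_R M\subseteq \Ass_R N\cup\Ass_R(M/N)$, and prove the clean intermediate statement that a simple $D(R,A)$-module has at most one associated prime; crucially, you reprove the $D$-stability of $\Gamma_I(M)$ from scratch by the double induction on the order $k$ of $\delta$ and the torsion exponent $n$, via $a\delta(m)=\delta(am)-[\delta,a](m)$ with $[\delta,a]$ of order $\le k-1$ and $I^{n-1}(am)\subseteq I^nm=0$ for $a\in I$; that induction does close up correctly (in fact one application of the identity per leading factor of $I$ suffices), so your key lemma is sound and needs no hypothesis beyond $A$-linearity of the operators. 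Both arguments in the end bound $\#\Ass_R M$ by the $D(R,A)$-length of $M$. What your route buys is self-containedness (no appeal to the $D$-module structure of $M_f$ or of local cohomology) and a reusable statement about simple $D$-modules; what the paper's route buys is brevity, since it leans on machinery already set up in Section \ref{2} and avoids the commutator computation entirely.
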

\begin{proof}
Suppose $M\neq 0$. Let $M_1=M$ and  $P_1$ be a maximal element in the set of the associated primes of $M_1$.
Then, $N_1=H^0_{P_1}(M_1)$ is a nonzero $D(R,A)$-submodule of $M_1,$ and it has only one associated
prime. Given $N_{j}$ and $M_j$, set $M_{j+1}=M_{j}/N_j$.  If $M_{j+1}\neq 0$, let $P_{j+1}$ be a maximal element in the set of the
associated primes of $M_{j+1}$. Then 
$N_{j+1}=H^0_{P_j} (M_{j+1})$ has only one associated
prime. If $M_{j+1}=0$, set $N_{j+1}=0$ and $P_{j+1}=0$. Since $M_1=M$ has finite length as a $D(R,A)$-module, there exist $\ell\in \NN$ 
such that $M_j=0$ for $j\geq \ell$ and then
$\Ass(M)\subset\{P_1,\dots,P_\ell\}$.
\end{proof}

\begin{Lemma}\label{LemmaFLDZ}
Let $A$ be a zero-dimensional Noetherian ring. Let $R$ be either $A[x_1,\ldots,x_n]$ or $A[[x_1,\ldots,x_n]]$. 
Then, $R_f$ has finite length as a $D(R,A)$-module for every $f\in R$.
\end{Lemma}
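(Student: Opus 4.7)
The plan is to reduce to the case where $(A,m,K)$ is local Artinian and then exhibit a finite filtration of $R_f$ by $D(R,A)$-submodules whose successive quotients are already known to have finite length in the equicharacteristic (field) case. Since $A$ is Noetherian of dimension zero, it is Artinian, and so decomposes as a finite product $A = A_1 \times \cdots \times A_s$ of local Artinian rings. The ring $R$ splits accordingly as $R = R_1 \times \cdots \times R_s$ with each $R_i$ polynomial or power series over $A_i$, and $R_f$ is the corresponding product of the $(R_i)_{f_i}$. It thus suffices to treat each factor separately, so I may assume $(A,m,K)$ is local with $m^N = 0$ for some $N$.

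Consider the $m$-adic filtration on $R$,
\[
R \supset mR \supset m^2 R \supset \cdots \supset m^N R = 0.
\]
The discussion in Section \ref{2} shows that for any ideal $I \subset A$ and any $D(R,A)$-module $M$, the submodule $IM$ is stable under $D(R,A)$; iterating, each $m^j R$ is a $D(R,A)$-submodule of $R$. Localization at $f$ is exact and preserves the $D(R,A)$-module structure, so we obtain a finite filtration of $R_f$ by $D(R,A)$-submodules whose successive quotients are $(m^j R / m^{j+1} R)_f$. Because $m^j/m^{j+1}$ is a finite-dimensional $K$-vector space, there is an $R/mR$-module isomorphism
\[
m^j R / m^{j+1} R \;\cong\; (m^j/m^{j+1}) \otimes_K (R/mR),
\]
so each successive quotient of the localized filtration is a finite direct sum of copies of $(R/mR)_{\bar f}$, where $\bar f$ denotes the image of $f$ in $R/mR$.

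The remaining step is to upgrade the known finite-length statement over $D(R/mR, K)$ to a finite-length statement over $D(R,A)$. From Section \ref{2}, the $D(R,A)$-action on $m^j R / m^{j+1} R$ factors through the natural surjection $\rho \colon D(R,A) \to D(R/mR, K)$, so its $D(R,A)$-submodules coincide with its $D(R/mR, K)$-submodules. Since $R/mR$ is either $K[x_1,\ldots,x_n]$ or $K[[x_1,\ldots,x_n]]$, the localization $(R/mR)_{\bar f}$ has finite length as a $D(R/mR,K)$-module by the equicharacteristic result recalled in Section \ref{2}. A finite direct sum of finite-length modules has finite length, so each graded piece of the filtration has finite length, and therefore $R_f$ itself does. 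The step requiring the most care---and the main conceptual point---is verifying that finite length is preserved under the change of differential operator rings along $\rho$; this is precisely why the factorization of the $D(R,A)$-action on $m^j R / m^{j+1} R$ through $\rho$ is essential to the argument.
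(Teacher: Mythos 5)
Your proof is correct and takes essentially the same route as the paper: both arguments filter $R_f$ by $D(R,A)$-submodules arising from a finite filtration of $A$ with field quotients (the paper uses a composition series of $A$ directly, while you first split $A$ into local Artinian factors and use the $m$-adic filtration, so your graded pieces are finite direct sums of copies of $(R/mR)_{\bar f}$ rather than single copies of $(R/m R)_f$), and both transfer the known equicharacteristic finite-length statement through the surjection $\rho\colon D(R,A)\to D(R/mR,K)$. There is no gap; the extra product-decomposition step is harmless but not needed.
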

\begin{proof}
Since  $A$ has finite length as a $A$-module, there is 
a finite filtration of ideals
$0=N_0\subset N_1\subset \ldots \subset N_\ell=A$
 such that $N_{j+1}/N_j$ is isomorphic to a field. Then, we have an induced filtration of $D(R,A)$-modules,
$0=N_0 R_f\subset N_1 R_f\subset \ldots \subset N_\ell R_f=R_f$. It suffices to prove that $N_{j+1} R_f/N_j R_f$
has finite length for $j=1,\ldots, \ell$.
We note that $N_{j+1} R_f/N_j R_f$ is zero or
isomorphic to $ (R/m)_f$ for some maximal ideal $m\subset A$. 
Since $N_{j+1} R_f/N_j R_f$ has finite length as a $D(R/m R,A/m A)$-module, 
it has finite length as a $D(R,A)$-module, which concludes the proof. 
\end{proof}

\begin{Prop} \label{PropAssDZ}
Let $A$ be a zero-dimensional commutative  Noetherian ring.
Let $R$ be either $A[x_1,\ldots,x_n]$ or $A[[x_1,\ldots,x_n]]$.
Then, $\Ass_R M$ is finite for every object in  $M\in C(R,A)$; in particular, this holds for $\cT (R)$ for every functor $\cT$.  
\end{Prop}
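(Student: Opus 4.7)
The plan is to combine the two preceding lemmas in the obvious way. By Lemma \ref{LemmaFLDZ}, every localization $R_f$ has finite length as a $D(R,A)$-module. On the other hand, the full subcategory of $D(R,A)$-modules of finite length is closed under subobjects, quotients, and extensions (these are standard properties of length in abelian categories). Since $C(R,A)$ is defined as the smallest such subcategory containing the $R_f$, every object $M\in C(R,A)$ must have finite length as a $D(R,A)$-module.

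Once finite length is established, Lemma \ref{LemmaAssFL} gives that $\Ass_R M$ is finite. For the second assertion, the excerpt already records that $\cT(R)\in C(R,A)$ for every Lyubeznik functor $\cT$ (cf. Lemma 5 in \cite{LyuFreeChar}), so the statement for $\cT(R)$ is an immediate consequence. In particular, taking $\cT = H^i_I$ recovers the finiteness of $\Ass_R H^i_I(R)$.

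I do not expect any real obstacle here: the two lemmas were set up precisely to yield this proposition, and the only point that even deserves a sentence in the write-up is the verification that finite-length $D(R,A)$-modules form a subcategory closed under subobjects, quotients, and extensions, which is standard. A careful version of the proof might phrase this by saying: let $\cF$ be the class of $D(R,A)$-modules of finite length; then $\cF$ satisfies the three closure properties in the definition of $C(R,A)$ and contains $R_f$ for all $f\in R$, hence $C(R,A)\subset \cF$, and Lemma \ref{LemmaAssFL} finishes the argument.
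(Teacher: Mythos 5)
Your proof is correct and follows essentially the same route as the paper: Lemma \ref{LemmaFLDZ} gives finite length of each $R_f$, closure of finite-length $D(R,A)$-modules under subobjects, quotients, and extensions forces every object of $C(R,A)$ to have finite length, and Lemma \ref{LemmaAssFL} then yields finiteness of $\Ass_R M$. If anything, you spell out the invocation of Lemma \ref{LemmaAssFL} more explicitly than the paper's own two-line proof does.
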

\begin{proof}
By Lemma \ref{LemmaFLDZ}, $R_f$ has finite length in the category of
$D(R,A)$-modules for every $f \in R$. If  $M$ is an object of $C(R,A)$, then $M$
has finite length as a $D(R,A)$-module, because length is additive.
\end{proof}

\begin{Lemma}\label{LemmaFLDO}
Let $A$ be a one-dimensional ring,  $\pi\in A$ be an element such that $\dim(A/\pi A)=0,$ 
and $R$ be either $A[x_1,\ldots,x_n]$ or $A[[x_1,\ldots,x_n]]$. 
Then, $R_f/\pi R_f$ has finite length as a $D(R,A)$-module for every $f\in R$.
\end{Lemma}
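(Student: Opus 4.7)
The plan is to mimic the proof of Lemma \ref{LemmaFLDZ}, but first reducing modulo $\pi$ so as to land over the zero-dimensional base ring $A/\pi A$, where that lemma already applies.

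First, I would set $\overline{A} := A/\pi A$ and $\overline{R} := R/\pi R$, and note that $\overline{R}$ is canonically isomorphic to $\overline{A}[x_1,\ldots,x_n]$ or $\overline{A}[[x_1,\ldots,x_n]]$ in the two respective cases. Moreover, localizing the right exact sequence $R \xrightarrow{\pi} R \to \overline{R} \to 0$ at the multiplicative set $\{f^k\}_{k \geq 0}$ yields the canonical isomorphism
$$R_f/\pi R_f \;\cong\; \overline{R}_{\overline{f}},$$
where $\overline{f}$ denotes the image of $f$ in $\overline{R}$.

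Next, since $\dim \overline{A} = 0$ by hypothesis, Lemma \ref{LemmaFLDZ} applied to $\overline{A} \subset \overline{R}$ shows that $\overline{R}_{\overline{f}}$ has finite length as a $D(\overline{R}, \overline{A})$-module. To finish, I would transport this finiteness across the surjection $\rho: D(R,A) \to D(\overline{R}, \overline{A})$ recalled in Section \ref{2}: since the $D(R,A)$-action on the $\overline{R}$-module $R_f/\pi R_f$ factors through $\rho$, its $D(R,A)$-submodules coincide with its $D(\overline{R}, \overline{A})$-submodules, so finite length transfers from one category to the other.

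The main (rather mild) obstacle is verifying that the isomorphism $R_f/\pi R_f \cong \overline{R}_{\overline{f}}$ is truly an isomorphism of $D(R,A)$-modules once the right-hand side is equipped with the action coming from $\rho$. This is immediate from two facts already in place: $\pi \in A$ lies in the center of $D(R,A)$, so $\pi R_f$ is a $D(R,A)$-submodule of $R_f$ and the quotient structure is given by $\rho$; and the localization $M \mapsto M_f$ of a $D(R,A)$-module is again a $D(R,A)$-module in a functorial way, as noted in Section \ref{2}. With these observations in hand, the proof is entirely parallel to that of Lemma \ref{LemmaFLDZ}, with the single step "reduce modulo $\pi$" replacing the finite filtration of $A$ used there.
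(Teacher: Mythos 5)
Your proposal is correct and follows essentially the same route as the paper: reduce modulo $\pi$, identify $R_f/\pi R_f$ with a localization of $\bar{R}=(A/\pi A)[x_1,\ldots,x_n]$ or $(A/\pi A)[[x_1,\ldots,x_n]]$, apply Lemma \ref{LemmaFLDZ} over the zero-dimensional base, and transfer finite length through the surjection $\rho\colon D(R,A)\to D(\bar{R},\bar{A})$, which makes the two lengths coincide. You simply spell out details (the isomorphism $R_f/\pi R_f\cong \bar{R}_{\bar{f}}$ and its compatibility with the $D$-module structures) that the paper leaves implicit.
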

\begin{proof}
The length of $R_f/\pi R_f$ as a $D(R,A)$-module or as a $D(R/\pi R, A/\pi A)$-module is the same. 
Since $A/\pi A$ has dimension zero and $R/\pi R$ is either $(A/\pi A)[x_1,\ldots,x_n]$ or $(A/\pi A) [[x_1,\ldots,x_n]]$, 
the result follows from  Lemma \ref{LemmaAssFL} and Lemma \ref{LemmaFLDZ}.
\end{proof}

\begin{Lemma}\label{LemmaAQFL}
Let $A$ be a one-dimensional ring,  $\pi\in A$ be an element such that $\dim(A/\pi A)=0,$ 
and $R$ be either $A[x_1,\ldots,x_n]$ or $A[[x_1,\ldots,x_n]]$. Let $\bar{A}$ and $\bar{R}$ denote $A/\pi A$ and $R/\pi R$
respectively. Let $M$ be a $D(R,A)$-module, such that
$\Ann_{M} (\pi)$  and $M \otimes_R \bar{R}$ are objects in $C(\bar{R},\bar{A})$. Then, 
$\Ann_{\cT(M)} (\pi)$  and $\cT(M) \otimes_R \bar{R}$ are objects in $C(\bar{R},\bar{A})$
for every functor $\cT$.
\end{Lemma}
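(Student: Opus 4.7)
The plan is to induct on the number $t$ of basic operations composing the Lyubeznik functor $\cT=\cT_t\circ\cdots\circ\cT_1$. The base case $t=0$ is the hypothesis itself. For the inductive step, writing $\cT=\cT'\circ\cT_1$, it suffices to show that $\cT_1(M)$ again satisfies the hypothesis (namely that $\Ann_{\cT_1(M)}(\pi)$ and $\cT_1(M)\otimes_R\bar R$ lie in $C(\bar R,\bar A)$); then the inductive hypothesis, applied to $\cT'$ acting on $\cT_1(M)$, yields the conclusion. So we reduce to the case where $\cT$ is a single basic operation: $H^i_Z$, $H^i_{Z_1\setminus Z_2}$, or a kernel, image, or cokernel of an arrow in the long exact sequence of a pair.

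The central device is the $4$-term exact sequence arising from multiplication by $\pi$,
$$0\to K\to M\xrightarrow{\pi} M\to M/\pi M\to 0, \qquad K:=\Ann_M(\pi),$$
split into the two short exact sequences
$$0\to K\to M\to \pi M\to 0, \qquad 0\to \pi M\to M\to M/\pi M\to 0.$$
By hypothesis, $K$ and $M/\pi M$ lie in $C(\bar R,\bar A)$. Both are $\pi$-killed, so for any closed $Z\subset\Spec R$ the local cohomology $H^j_Z$ computed over $R$ coincides with the one computed over $\bar R$; and since $C(\bar R,\bar A)$ is preserved by all Lyubeznik functors over $\bar R$, every $H^j_Z(K)$ and $H^j_Z(M/\pi M)$ lies in $C(\bar R,\bar A)$.

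Take $\cT=H^i_Z$. Multiplication by $\pi$ on $H^i_Z(M)$ factors as
$$H^i_Z(M)\xrightarrow{\alpha} H^i_Z(\pi M)\xrightarrow{\beta} H^i_Z(M),$$
with $\alpha$ induced by $M\xrightarrow{\pi}\pi M$ and $\beta$ by the inclusion $\pi M\hookrightarrow M$. The long exact sequences obtained by applying $H^\bullet_Z$ to the two short exact sequences identify $\ker\alpha$, $\Coker\alpha$, $\ker\beta$, and $\Coker\beta$ respectively with a quotient of $H^i_Z(K)$, a submodule of $H^{i+1}_Z(K)$, a quotient of $H^{i-1}_Z(M/\pi M)$, and a submodule of $H^i_Z(M/\pi M)$; all four therefore lie in $C(\bar R,\bar A)$. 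A short diagram chase then exhibits $\Ann_{H^i_Z(M)}(\pi)=\ker(\beta\alpha)$ as an extension of $\ker\alpha$ by a submodule of $\ker\beta$, and $H^i_Z(M)/\pi H^i_Z(M)=\Coker(\beta\alpha)$ as an extension of a quotient of $\Coker\alpha$ by $\Coker\beta$. Closure of $C(\bar R,\bar A)$ under subobjects, quotients, and extensions completes this case.

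The case $\cT=H^i_{Z_1\setminus Z_2}$ is verbatim the same using the $\delta$-functor $H^\bullet_{Z_1\setminus Z_2}$, and the remaining basic operations---the kernel, image, or cokernel of an arrow in the long exact sequence of a pair---are handled by exactly the same pattern, since each such functor sits in a short exact sequence within that long exact sequence and therefore inherits long exact sequences from short exact sequences of modules. I expect the main obstacle to be organizing the diagram chase cleanly in this last case, where several long exact sequences must be overlaid; the difficulty is bookkeeping rather than conceptual.
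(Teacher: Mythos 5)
Your reduction to $t=1$ and your treatment of the case $\cT=H^i_Z$ (for $Z=Z_1\setminus Z_2$, with $Z_2=\emptyset$ covering $H^i_{Z_1}$) is essentially the paper's own argument: the same two short exact sequences $0\to\Ann_M(\pi)\to M\to\pi M\to 0$ and $0\to\pi M\to M\to M\otimes_R\bar R\to 0$, the same factorization of multiplication by $\pi$ on $H^i_Z(M)$ through $H^i_Z(\pi M)$, the identification of the relevant kernels and cokernels as subquotients of $H^\bullet_Z(\Ann_M(\pi))$ and $H^\bullet_Z(M\otimes_R\bar R)$, and closure of $C(\bar R,\bar A)$ under subobjects, extensions, and quotients. (You are in fact slightly more explicit than the paper in recording why $H^j_Z(\Ann_M(\pi))$ and $H^j_Z(M\otimes_R\bar R)$ lie in $C(\bar R,\bar A)$, via independence of base for $\pi$-killed modules.)

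The one place your sketch goes wrong is the last case, where $\cT$ is the kernel, image, or cokernel of an arrow in the long exact sequence of the pair. Your justification --- that such a functor ``inherits long exact sequences from short exact sequences of modules'' --- is not valid: the kernel or cokernel of a morphism of $\delta$-functors is not in general a $\delta$-functor, so you cannot rerun the diagram chase for $\cT$ itself. No chase is needed, though: such a $\cT(M)$ admits an injection $\cT(M)\hookrightarrow H^{i_1}_{Z_{j_1}}(M)$ and a surjection $H^{i_2}_{Z_{j_2}}(M)\twoheadrightarrow\cT(M)$; since $\Ann_{(-)}(\pi)$ is left exact and $(-)\otimes_R\bar R$ is right exact, $\Ann_{\cT(M)}(\pi)$ is a submodule of $\Ann_{H^{i_1}_{Z_{j_1}}(M)}(\pi)$ and $\cT(M)\otimes_R\bar R$ is a quotient of $H^{i_2}_{Z_{j_2}}(M)\otimes_R\bar R$, both already shown to lie in $C(\bar R,\bar A)$, and closure under subobjects and quotients finishes. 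This is exactly how the paper disposes of that case, and with this substitution your proof is complete.
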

\begin{proof}
We recall that $\cT$ has the form $\cT =\cT_1\circ \dots\circ\cT_t$, 
where every functor $\cT_j$
is either $H^i_{Z_1},$ $H^i_{Z_1\setminus Z_2},$ or the kernel, image, or cokernel of some arrow in the  long exact sequence
\begin{equation}\label{LES}
\ldots\FDer{\alpha_i} H^i_{Z_1}(M)\FDer{\beta_i} H^i_{Z_2}(M)\FDer{\gamma_i} H^i_{Z_1/Z_2}(M)\to \ldots
\end{equation}
for closed subsets $Z_1,Z_2$ of $\Spec(R)$ such that $Z_2\subset Z_1$.

It suffices to prove the claim for $t=1$ by an inductive argument. Suppose that $\cT=H_Z(-),$ where $Z=Z_1\setminus Z_2$ for closed subsets $Z_1,Z_2\subset \Spec(R)$ such that $Z_2\subset Z_1$. 
We note that $H^i_{Z} (-)=H^i_{Z_1}(-),$ if we  choose $Z_2= \emptyset$. 
The exact sequences
$$
0\to\Ann_{M}(\pi)\to M\FDer{\cdot \pi} \pi M\to 0,
$$
and
$$
0\to\pi M \to M\to M\otimes_R \bar{R}\to 0,
$$
induce two long exact sequences,
\begin{equation*}\label{SES1}
\ldots\to H^i_Z(\Ann_{M}(\pi))\FDer{\phi_i} H^i_Z(M)\FDer{\varphi_i} H^i_Z(\pi M)\to\ldots
\end{equation*}
and
$$
\ldots \to H^i_Z(\pi M) \FDer{\phi'_i} H^i_Z(M)\FDer{\varphi'_i} H^i_Z(M\otimes_R \bar{R})\to\ldots.
$$
Since the composition of $\phi'_i \circ\varphi_i$ is the multiplication by $\pi$ on $H^i_Z( M),$
we obtain the exact sequences
$$
0\to \Ker(\varphi_i)\to \Ann_{H^i_Z(M)} (\pi)\FDer{\varphi_i} \Ker(\phi'_i),
$$
and
$$
\Coker(\varphi_i)\FDer{\phi'_i} H^i_Z(M)\otimes_R \bar{R} \to \Coker(\phi'_i)\to 0.
$$
Then, $\Ann_{H^i_Z(M)}(\pi)$ and $H^i_Z(M)\otimes_R \bar{R}$ are objects in $C(\bar{R},\bar{A}),$
as
$\Ker(\varphi_i),$  $\Ker(\phi'_i),$
$\Coker(\varphi_i)$ and  $\Coker(\phi'_i)$ belong to $C(\bar{R},\bar{A})$ and this category is closed under sub-objects, 
extensions and quotients.

If $\cT$ is a kernel, image, or cokernel of a morphism in the long exact sequence (\ref{LES}), there exists 
an injection,  $\cT(M) \to H^{i_1}_{Z_{j_1}}( M)$, and a surjection $H^{i_2}_{Z_{j_2}}( M)\to \cT(M)$ for some $i_1,i_2\geq 0$
and $j_1,j_2 \in\{1,2\}$. Then,
\begin{equation*}
0\to\Ann_{\cT(M)}(\pi)\to \Ann_{H^{i_1}_{Z_{j_1}}(M)} (\pi)
\end{equation*}
and
\begin{equation*}
H^{i_2}_{Z_{j_2}}(M)\otimes_R \bar{R}\to \cT(M)\otimes_R \bar{R}\to 0
\end{equation*}
are exact. Therefore, $\Ann_{\cT(M)} (\pi)$ and $\cT(M)\otimes_R \bar{R}$ belong to $C(\bar{R},\bar{A}).$
\end{proof}

\begin{Prop} \label{PropAssDO}
Let $A$ be a one-dimensional ring,  $\pi\in A$ be an element such that $\dim(A/\pi A)=0$, and
 $R$ be either $A[x_1,\ldots,x_n]$ or $A[[x_1,\ldots,x_n]]$.
Then, the set of associated primes over $R$ of $\cT (R)$ that contain 
$\pi$ is finite for every functor $\cT$.
\end{Prop}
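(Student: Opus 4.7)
Set $\bar A := A/\pi A$, which is zero-dimensional by hypothesis, and $\bar R := R/\pi R$. The plan is first to observe that a prime $P \in \Ass_R(\cT(R))$ contains $\pi$ if and only if $P$ is an associated prime of the submodule $\Ann_{\cT(R)}(\pi) \subset \cT(R)$: if $P = \Ann_R(x)$ with $\pi \in P$ then $\pi x = 0$, so $x \in \Ann_{\cT(R)}(\pi)$ and $P \in \Ass_R$ of the latter; conversely, $\Ann_{\cT(R)}(\pi)$ is annihilated by $\pi$, so every one of its associated primes contains $\pi$. It therefore suffices to prove that $\Ass_R(\Ann_{\cT(R)}(\pi))$ is finite, which, since this module is an $\bar R$-module, is the same as the finiteness of $\Ass_{\bar R}(\Ann_{\cT(R)}(\pi))$.

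The heart of the argument is Lemma \ref{LemmaAQFL} applied with $M = R$: it yields $\Ann_{\cT(R)}(\pi) \in C(\bar R, \bar A)$ as soon as I check that both $\Ann_R(\pi)$ and $R \otimes_R \bar R = \bar R$ belong to $C(\bar R, \bar A)$. The module $\bar R = \bar R_1$ is trivially among the generators of $C(\bar R, \bar A)$, so the only substantive point is $\Ann_R(\pi)$. Using flatness of $R$ over the Noetherian ring $A$, I identify $\Ann_R(\pi)$ with $JR$, where $J := \Ann_A(\pi)$ (tensor the exact sequence $0 \to J \to A \FDer{\pi} A$ with $R$). Since $\pi$ kills the finitely generated $A$-module $J$, the latter is a module of finite length over the Artinian ring $\bar A$ and admits a finite filtration by $\bar A$-submodules with successive quotients $A/m_i$ for maximal ideals $m_i \supset \pi A$. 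Flatness of $R$ over $A$ promotes this to a $D(R,A)$-filtration of $JR$ with successive quotients $R/m_i R$; each such quotient is a $D(\bar R, \bar A)$-quotient of $\bar R$ (since $m_i \supset \pi A$), so lies in $C(\bar R, \bar A)$, and closure under extensions gives $\Ann_R(\pi) \in C(\bar R, \bar A)$.

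With the hypothesis of Lemma \ref{LemmaAQFL} verified, $\Ann_{\cT(R)}(\pi)$ lies in $C(\bar R, \bar A)$. Lemma \ref{LemmaFLDO} gives that each $R_f/\pi R_f = \bar R_f$ has finite length as a $D(\bar R, \bar A)$-module, and additivity of length under subobjects, extensions, and quotients then forces every object of $C(\bar R, \bar A)$, in particular $\Ann_{\cT(R)}(\pi)$, to have finite $D(\bar R, \bar A)$-length. Applying Lemma \ref{LemmaAssFL} produces the desired finiteness of $\Ass_{\bar R}(\Ann_{\cT(R)}(\pi))$, which by the first paragraph is exactly the set $\{P \in \Ass_R(\cT(R)) : \pi \in P\}$. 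The main obstacle is the verification that $\Ann_R(\pi)$ itself belongs to $C(\bar R, \bar A)$: the rest is essentially bookkeeping with the preceding lemmas, but here one must carefully promote a length filtration of the ideal $J = \Ann_A(\pi)$ across the flat base change $A \to R$ and recognize the resulting quotients $R/m_i R$ as objects of $C(\bar R, \bar A)$.
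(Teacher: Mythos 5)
Your proof is correct and follows essentially the same route as the paper's: identify the associated primes containing $\pi$ with $\Ass_R \Ann_{\cT(R)}(\pi)$, apply Lemma \ref{LemmaAQFL} with $M=R$, and conclude that this module has finite $D(\bar{R},\bar{A})$-length (via Lemma \ref{LemmaFLDO} and additivity) so that Lemma \ref{LemmaAssFL} finishes the argument. The only difference is that you explicitly verify the hypothesis of Lemma \ref{LemmaAQFL} for $M=R$ --- in particular that $\Ann_R(\pi)=\Ann_A(\pi)R$ lies in $C(\bar{R},\bar{A})$ when $\pi$ is a zero divisor, via flatness of $R$ over $A$ and a finite filtration with quotients $R/m_iR$ --- a point the paper leaves implicit.
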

\begin{proof}
The set of associated primes of $\cT (R)$ that contain 
$\pi$ is $\Ass_R \Ann_{\cT(R)}  (\pi).$ Since
$\Ann_{\cT(R)} (\pi)$ is a $D(R,A)$-module of finite length by Lemma \ref{LemmaAQFL}, we have that
$\Ass_R \Ann_{\cT(R)}  (\pi)$ is finite by Lemma \ref{LemmaAssFL}.
\end{proof}

\begin{Cor} \label{CorAssPoly}
Let $A$ be a one-dimensional local ring, and $R$ be $A[x_1,\ldots,x_n]$.
Then, $\Ass_R \cT(R)$ is finite.
\end{Cor}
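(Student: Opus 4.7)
My plan is to combine Proposition \ref{PropAssDO} (finiteness of associated primes containing $\pi$) with Proposition \ref{PropAssDZ} applied after localization at $\pi$.

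First I would produce an element $\pi\in m_A$ with $\dim(A/\pi A)=0$. Because $A$ is local of dimension one, every minimal prime $\mathfrak{p}$ of $A$ satisfies $\dim(A/\mathfrak{p})=1$: otherwise $A/\mathfrak{p}$ would be an Artinian local domain, hence a field, forcing $\mathfrak{p}=m_A$, impossible since every prime of $A$ is contained in $m_A$ while $\dim(A)=1$. Since $A$ has only finitely many minimal primes, prime avoidance provides $\pi\in m_A$ lying outside each of them; then the only prime of $A$ containing $\pi$ is $m_A$, so $\dim(A/\pi A)=0$.

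Next I would split $\Ass_R\cT(R)$ according to whether the prime contains $\pi$ or not. The associated primes containing $\pi$ form a finite set by Proposition \ref{PropAssDO}. For the remaining ones I would invert $\pi$: since $R=A[x_1,\ldots,x_n]$ is a polynomial ring, one has $R_\pi=A_\pi[x_1,\ldots,x_n]$, and $A_\pi$ is zero-dimensional because inverting $\pi$ kills $m_A$ and leaves only the images of the minimal primes of $A$, which are maximal in $A_\pi$. The standard correspondence $P\mapsto PR_\pi$ identifies the associated primes of $\cT(R)$ not containing $\pi$ with $\Ass_{R_\pi}\cT(R)_\pi$.

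To close the argument I must check that $\cT(R)_\pi$ belongs to $C(R_\pi,A_\pi)$, so that Proposition \ref{PropAssDZ} applies over the zero-dimensional ring $A_\pi$. This is the step where I expect a little care: one observes that the generators $R_f$ of $C(R,A)$ localize to $(R_f)_\pi=(R_\pi)_f$, which are generators of $C(R_\pi,A_\pi)$, and that localization, being exact, commutes with the formation of subobjects, quotients, and extensions. An induction on how $\cT(R)$ is built from the generators then places $\cT(R)_\pi$ in $C(R_\pi,A_\pi)$. Proposition \ref{PropAssDZ} now gives finiteness of $\Ass_{R_\pi}\cT(R)_\pi$, and taking the union with the finite set of associated primes of $\cT(R)$ containing $\pi$ yields the desired conclusion.
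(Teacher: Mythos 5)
Your argument is correct and follows the paper's proof essentially verbatim: pick a parameter $\pi$ of the one-dimensional local ring $A$, use Proposition \ref{PropAssDO} for the associated primes containing $\pi$, and pass to $R_\pi=A_\pi[x_1,\ldots,x_n]$ with $\dim(A_\pi)=0$ and Proposition \ref{PropAssDZ} for the rest. The only difference is that you spell out details the paper leaves implicit (existence of $\pi$ via prime avoidance, and the check that localization carries $C(R,A)$ into $C(R_\pi,A_\pi)$, where the paper simply invokes $\Ass_{R_\pi}\cT(R_\pi)$), and these verifications are sound.
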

\begin{proof}
Let $\pi$ be a parameter for $A$.
Then, the set of associated primes over $R$ of  $\cT (R)$ that contain 
$\pi$ is finite by Proposition \ref{PropAssDO}. Since $R_\pi=A_\pi[x_1,\ldots,x_n]$ and $\Dim(A_\pi)=0$, the
set of associated  primes over $R$ of $\cT (R)$ that does not contain 
$\pi$, which is in correspondence with $\Ass_{R_\pi} \cT (R_\pi)$, is finite by Corollary \ref{PropAssDZ}.
\end{proof}

\begin{Cor} \label{CorAssSeries}
Let $(A,m,K)$ be a one-dimensional local domain, and let $R$ be $A[[x_1,\ldots,x_n]]$.
Then, $\Ass_R \cT(R)$ is finite.
\end{Cor}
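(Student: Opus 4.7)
The plan is to follow the structure of Corollary~\ref{CorAssPoly}, splitting $\Ass_R\cT(R)$ according to whether the primes contain a chosen parameter. Since $A$ is a one-dimensional local domain, I would choose a nonzero $\pi\in m$; then $\dim(A/\pi A)=0$, and Proposition~\ref{PropAssDO} immediately gives that the set of associated primes of $\cT(R)$ that contain $\pi$ is finite.

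For associated primes $P$ with $\pi\notin P$, the domain hypothesis forces $P\cap A=(0)$. Set $K'=\Frac(A)=A_\pi$ and $S=K'[[x_1,\ldots,x_n]]$. I would compare $\cT(R)$ with its extension to $S$ via the natural ring map $R\to S$, which factors as $R\to R_{(x_1,\ldots,x_n)}\to S$: the first is a localization, and the second is the $(x_1,\ldots,x_n)$-adic completion of a local Noetherian ring, faithfully flat because $(x_1,\ldots,x_n)$ is the maximal ideal of $R_{(x_1,\ldots,x_n)}$. By flat base change for Lyubeznik functors, $\cT(R)\otimes_R S\cong\cT'(S)$ for the corresponding Lyubeznik functor $\cT'$ on $S$, and Proposition~\ref{PropAssDZ}, applied to $S$ as a power series ring over the zero-dimensional ring $K'$, gives that $\Ass_S\cT'(S)$ is finite. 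The standard formula for associated primes under flat ring maps then identifies the set $\{P\in\Ass_R\cT(R) : \pi\notin P,\ P\subset(x_1,\ldots,x_n)R\}$ with contractions from a subset of $\Ass_S\cT'(S)$, and hence as a finite set.

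The main obstacle will be the associated primes $P$ with $\pi\notin P$ and $P\not\subset(x_1,\ldots,x_n)R$: these satisfy $PS=S$ and so are not detected by the base change to $S$. To treat these, I would instead use the faithfully flat completion $R\to\hat{R}\cong\hat{A}[[x_1,\ldots,x_n]]$, where $\hat{A}$ is the $m$-adic completion of $A$. Cohen's structure theorem presents $\hat{A}$ as a finite module over a complete discrete valuation ring $V$, so $\hat{R}$ is a finite integral extension of the regular local ring $V[[x_1,\ldots,x_n]]$. Lyubeznik's finiteness theorems for this latter ring (\cite{LyuDMod} in equal characteristic, \cite{LyuUMC} in mixed characteristic), combined with the standard behaviour of associated primes under finite integral extensions and under the faithfully flat map $R\to\hat{R}$, yield the finiteness of the remaining associated primes and hence of $\Ass_R\cT(R)$.
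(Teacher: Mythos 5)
Your first two steps are sound: the primes containing $\pi$ are handled by Proposition \ref{PropAssDO} exactly as in the paper, and your factorization $R\to R_{(x_1,\ldots,x_n)}\to K'[[x_1,\ldots,x_n]]$ (localization followed by completion at the maximal ideal) is indeed faithfully flat over $R_{(x_1,\ldots,x_n)}$, so flat base change plus the formula for associated primes under flat maps does correctly dispose of the associated primes contained in $(x_1,\ldots,x_n)R$; that sub-case is a genuine variation on the paper, which never isolates it. The trouble is your third step, and it is a real gap rather than a detail: after inverting $\pi$, the primes you have left are all the primes of $R_\pi$ not contained in $(x_1,\ldots,x_n)R_\pi$, including every maximal ideal of $R_\pi$, so this is the bulk of the hard case, and the argument you sketch for it does not go through.

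Passing to $\hat R=\hat A[[x_1,\ldots,x_n]]$ is fine, and Cohen's theorem does make $\hat R$ module-finite over a complete regular local ring $T=V[[x_1,\ldots,x_n]]$. But the finiteness theorems you invoke for $T$ (\cite{Huneke,LyuDMod,LyuUMC}) concern $\cT(T)$, i.e. local cohomology of the regular ring itself; they say nothing about $\cT(\hat R)$ viewed as a $T$-module. First, an ideal $I\subset\hat R$ need not have the same radical as $(I\cap T)\hat R$, so $\cT(\hat R)$ is not in general even expressible as a Lyubeznik functor over $T$ applied to the finite $T$-module $\hat R$. Second, and decisively, there is no ``standard behaviour under finite integral extensions'' that transfers these theorems: finiteness of associated primes of local cohomology of finitely generated modules over regular rings is false in general, and the counterexamples of Katzman and of Singh--Swanson cited in the introduction \cite{Moty,S-S} are exactly of this shape, being local cohomology of rings finite over (indeed quotients of) regular rings. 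Nor can you apply Corollary \ref{CorAssSeries} itself to $\hat A$, since the completion of a one-dimensional local domain need not be a domain. This missing case is precisely where the paper does its work: it shows $R_\pi$ is regular (as the generic fiber of $A\to R$, by Grothendieck's theorem), and then proves finiteness of $\Ass_{R_\pi}\cT(R_\pi)$ by $F$-module theory in characteristic $p$, and in characteristic $0$ by verifying Lyubeznik's conditions---equidimensionality, algebraicity over $\Frac(A)$ of the residue fields at maximal ideals of $R_\pi$ via a Weierstrass-type induction, and existence of suitable derivations---so that the $D$-module finiteness machinery applies. Your proposal would need an argument of comparable substance for the primes not contained in $(x_1,\ldots,x_n)R$.
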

\begin{proof}
Let $\pi$ be a parameter for $A$.
Then, the set of associated primes over $R$ of  $\cT (R)$ that contain 
$\pi$ is finite by Proposition \ref{PropAssDO}.
It remains to show that the set of the associated primes not containing
$\pi$ is finite.

We will proceed by cases.
If $A$ is a ring of characteristic $p>0,$ we have that $R_\pi$ is a regular ring by  Theorem $5.1.2$ in  \cite{Gro}
because $R_\pi$ is the fiber at the zero prime ideal of $A.$ Then, 
$\Ass_{R_\pi} \cT (R_\pi)$ is finite by Corollary $2.14$ in \cite{LyuFMod}.

If $A$ is not a ring of characteristic $p>0.$ We have again that $R_\pi$ is a regular ring by  Theorem $5.1.2$ in \cite{Gro}. 
Let $F=A_\pi$ be the fraction field of $A$ and $S=F\otimes_A R=R_\pi.$
Then, $F$ is a field of characteristic $0$ and $S$ is an $F$-algebra. We claim
that $S$ and $F$ satisfy the properties:
\begin{itemize}
\item[\rm{(i)}] $S$ is equidimensional of dimension $n$;
\item[\rm{(ii)}] every residual field with respect to a maximal ideal is an algebraic extension of $F$;
\item[\rm{(iii)}] there exist $F$-linear derivations $\partial_1,\ldots, \partial_n \in \Der_{F} (S)$ and elements
$z_1\ldots, z_n\in R$ such that $\partial_i a_j=1$ if $i=j$ and 
$0$ otherwise.
\end{itemize}
 
We will proceed following the ideas of Lyubeznik in \cite{LyuUMC}. 
Let $\eta\subset S$ be a maximal ideal and let $Q = \eta\cap R.$ 
Then $Q$ is a prime ideal
of $R$ not containing $\pi,$ and it is maximal among the ideals of $R$ not containing $\pi.$ 
By induction on $n,$
it suffices to show that if $P$ is a nonzero prime ideal of $R$ not containing $\pi,$ then
there exist elements $y_1,\ldots, y_n\in R$ such that $R=A[[y_1,\ldots,y_n]]$ and
$R/P$ is a finitely generated $R_{n-1}/  P\cap R_{n-1}$-module, where $R_{n-1} = A[[y_1,\ldots, y_{n-1}]].$
Then, the finiteness implies that $\Ht(P\cap R_{n-1})= \Ht(P) -1.$ In addition, the
prime ideal $P$ is maximal among all ideals of $R$ not containing $\pi$ if and
only if $ P \cap R_n$ is maximal among all ideals of $R_n$ not containing $\pi.$
In addition, $S/PS=(R/P)\otimes_A F$ is an algebraic extension of $F$ if and only if $F\otimes_A R_{n-1}/P\cap R_{n-1}$ is an algebraic extension of $F$.

Let $\bar{P}$ be the image of $P$ in $\bar{R}=R/mR
= k[[x_1,\ldots,x_n]].$ There exist
new variables $y_1,\ldots,y_n$ such that
$\bar{R}/\bar{P}$ is finite over $\bar{R}_{n-1}/\bar{P}\cap \bar{R}_{n-1},$ where $\bar{R}_{n-1}=K[[y_1,\ldots y_{n-1}]].$
Let $r_1, \ldots ,r_s \in \bar{R} /\bar{P}$ be a set of generators over $\bar{R}_{n-1}/\bar{P}\cap \bar{R}_{n-1}.$
Lifting these variables to $R,$
we get that $R=A[[y_1,\ldots,y_n]]$. 
For every
$f \in R/ P$ there exist a finite number of elements $g_{1,1},\ldots, g_{1,s}, v_{1,j} \in R_{n-1}$ and $h_{1,j} \in m$ with 
$$ f=g_{1,1}r_1+\ldots +g_{1,s}sr_s+\sum_j h_{1,j} v_{1,j}.$$
We can apply the same idea to $v_{i,j}$ inductively to obtain a finite number of elements
$g_{t,1},\ldots, g_{t,s}, v_{t,j} \in R_{n-1}$ and $h_{t,j} \in m^t$  such that 
$$f=\left(\sum^t_{k=1}\sum_{i} h_{k-1,i}g_{k,1} \right)r_1+\ldots + \left(\sum^t_{k=1} \sum_{i} h_{k-1,i}g_{k,s}\right)r_s+\sum_j h_{t,j} v_{t,j}.$$
Since $R/P$ is $m$-adically separated and complete, we can take $$G_\ell=\sum^\infty_{k=1}\sum_{i} h_{k-1,i}g_{k,\ell},$$
and so, $f=G_1r_1+\ldots +G_s r_s.$ 
This proves that $r_1,\ldots, r_s$ is a finite system of generators of $R/P$ as
an $R_{n-1}/P R_{n-1}$-module and concludes the proof of the claim that
$R\otimes_A F$ and $F$ satisfy properties (i) and  (ii). 
In addition, we have that $z_i=x_i$ and $\partial_i=\frac{\partial}{\partial x_i}$ satisfies (iii).
Then, we have that $\Ass_{R_\pi} \cT (R_\pi)$ is finite by using the results of $D$-modules in \cite{MeNa} as it was done in \cite{LyuDMod}. 
It is proven explicitly in Theorem $4.4$ in \cite{Nunez}.
\end{proof}


\section{Bass numbers}
\label{4}
\subsection{Facts about Bass numbers}
\begin{Lemma}\label{LemmaBassN}
Let $(R,m,K)$ be a local ring and $M$ be an $R$-module. Then, the following are equivalent:
\begin{itemize} 
\item[\rm{a)}] $\Dim_K\Ext^j_R(K,M)$ is finite for all $j\geq 0$,
\item[\rm{b)}] $\Length(\Ext^j_R(N,M))$ is finite for every finite length module $N$  and for all $j\geq 0$, and
\item[\rm{c)}] $\Length(\Ext^j_R(N,M))$ is finite for one  module of finite length $N$ and for all $j\geq 0$.
\end{itemize}
\end{Lemma}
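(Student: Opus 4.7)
The plan is to establish $(a)\Rightarrow(b)\Rightarrow(c)\Rightarrow(a)$, after the preliminary observation that, since $mK=0$, the $R$-module $\Ext^j_R(K,M)$ is a $K$-vector space, so $\Dim_K\Ext^j_R(K,M)$ and $\Length\Ext^j_R(K,M)$ coincide as elements of $\NN\cup\{\infty\}$. Hence condition $(a)$ is equivalent to saying $\Length\Ext^j_R(K,M)<\infty$ for all $j\geq 0$.

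For $(a)\Rightarrow(b)$ I would induct on $\Length(N)$: pick a maximal submodule $N''\subsetneq N$ so that $N/N''\cong K$, and apply $\Hom_R(-,M)$ to $0\to N''\to N\to K\to 0$. The segment
$$\Ext^j_R(K,M)\to\Ext^j_R(N,M)\to\Ext^j_R(N'',M)$$
of the resulting long exact sequence forces finite length of the middle term whenever the outer terms are finite length, which holds by $(a)$ and by the inductive hypothesis. Run at a single fixed degree $k$, this argument in fact yields the sharper form I shall invoke below: if $\Ext^k_R(K,M)$ has finite length, then so does $\Ext^k_R(N',M)$ for every finite length module $N'$. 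The implication $(b)\Rightarrow(c)$ is immediate, taking $N=K$.

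The delicate direction is $(c)\Rightarrow(a)$, because the particular $N$ provided by $(c)$ need not directly control $\Ext$ of its submodules or quotients. My plan is to induct on the cohomological degree $j$ rather than on $\Length(N)$. For $j=0$, the surjection $N\twoheadrightarrow K$ onto a simple top of $N$ induces an injection $\Hom_R(K,M)\hookrightarrow\Hom_R(N,M)$, so the source has finite length. For $j\geq 1$, assume $\Ext^k_R(K,M)$ has finite length for all $k<j$; applying the sharper form of $(a)\Rightarrow(b)$ at degree $j-1$ gives $\Ext^{j-1}_R(N'',M)$ of finite length for the maximal submodule $N''\subset N$. The segment
$$\Ext^{j-1}_R(N'',M)\to\Ext^j_R(K,M)\to\Ext^j_R(N,M)$$
of the long exact sequence for $0\to N''\to N\to K\to 0$ then sandwiches $\Ext^j_R(K,M)$ between two finite length modules, closing the induction.

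The main obstacle lies precisely in this last direction. A naive induction on $\Length(N)$ for $(c)\Rightarrow(a)$ fails, since finiteness of $\Ext^*_R(N,M)$ does not descend to submodules or quotients of $N$; closure of the class of finite length modules $N'$ with $\Ext^*_R(N',M)$ finite length under subobjects or quotients is not automatic. The trick of inducting on $j$, and feeding the previously established forward implication at each smaller cohomological degree back into the long exact sequence, is what bridges the gap.
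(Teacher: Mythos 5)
Your proof is correct. The paper itself gives no argument here---it simply cites Lemma $3.1$ of \cite{NunezDS}---so the comparison is with that reference, whose proof is the same standard double induction you use: induction on $\Length(N)$ via the long exact sequence for $0\to N''\to N\to K\to 0$ for the forward direction, and induction on the cohomological degree $j$, feeding the degree-$(j-1)$ instance of the forward implication back into the connecting-map segment $\Ext^{j-1}_R(N'',M)\to\Ext^j_R(K,M)\to\Ext^j_R(N,M)$, for $(c)\Rightarrow(a)$. Your write-up correctly identifies why the naive length induction fails for $(c)\Rightarrow(a)$ and handles the sandwich argument properly (the kernel of the right-hand map is a quotient of a finite length module, the image into $\Ext^j_R(N,M)$ has finite length). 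The only point worth making explicit is the tacit hypothesis $N\neq 0$ in $(c)$: your argument uses a simple quotient $N\twoheadrightarrow K$, which exists precisely because $N$ is a nonzero finite length module over the local ring $(R,m,K)$ (Nakayama); the zero module trivially satisfies $(c)$ but gives no information, so $(c)$ must be read with $N\neq 0$, an implicit assumption already present in the statement.
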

\begin{proof}
This is Lemma $3.1$ in \cite{NunezDS}.




\end{proof}

\begin{Lemma}\label{LemmaBCM}
Let $(R,m,K)$ be a Noetherian Cohen-Macaulay ring and $\pi\in R$ be a nonzero divisor. 
Let $M$ be an $R$-module annihilated by $\pi$. Then,
$\Dim_K\Ext^{\ell}_{R}(K,M)$ is finite for all $j\in \NN$ if and only if
$\Dim_K\Ext^{\ell}_{R/\pi R} (K,M)$ is finite for all $\ell\in \NN$. 
\end{Lemma}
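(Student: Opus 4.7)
The plan is to reduce the statement to Lemma~\ref{LemmaBassN} applied over both $R$ and $R/\pi R$, by choosing a single finite length test module $N$ whose Ext groups over the two rings are directly comparable. First I would use the Cohen-Macaulay hypothesis to extend $\pi=\pi_1$ to a system of parameters $\pi_1,\ldots,\pi_d$ of $R$; in a Cohen-Macaulay ring this is automatically a regular sequence, and $\pi_2,\ldots,\pi_d$ is then a regular sequence on $R/\pi R$. Take $N=R/(\pi_1,\ldots,\pi_d)$, which has finite length over both $R$ and $R/\pi R$ (the two lengths agreeing, since $\pi N=0$).

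The heart of the argument is to relate $\Ext^n_R(N,M)$ and $\Ext^n_{R/\pi R}(N,M)$ via Koszul complexes. The Koszul complex on $\pi_1,\ldots,\pi_d$ is a free $R$-resolution of $N$ and factors as the tensor product of the Koszul complex on $\pi_1$ with the Koszul complex on $\pi_2,\ldots,\pi_d$. After applying $\Hom_R(-,M)$, the $\pi_1$-factor becomes $M\FDer{0} M$ with zero differential because $\pi M=0$, while the other factor, identified via the $R/\pi R$-module structure of $M$ with $\Hom_{R/\pi R}$ of the Koszul complex on $\pi_2,\ldots,\pi_d$ over $R/\pi R$, computes $\Ext^\bullet_{R/\pi R}(N,M)$. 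The vanishing horizontal differential forces the total cohomology of the resulting double complex to split into two adjacent rows, giving
\[ \Ext^n_R(N,M)\cong \Ext^n_{R/\pi R}(N,M)\oplus \Ext^{n-1}_{R/\pi R}(N,M). \]

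The lemma then follows formally. Every Ext in sight is annihilated by $\pi$, so its $R$-length equals its $R/\pi R$-length, and the displayed split shows that $\Length \Ext^n_R(N,M)<\infty$ for all $n$ if and only if $\Length \Ext^n_{R/\pi R}(N,M)<\infty$ for all $n$. Applying Lemma~\ref{LemmaBassN} to the test module $N$, separately over $R$ and over $R/\pi R$, converts these two conditions into the finiteness of $\Dim_K\Ext^\ell_R(K,M)$ and of $\Dim_K\Ext^\ell_{R/\pi R}(K,M)$, respectively. The main obstacle is the Koszul bookkeeping: one must set up the tensor-product Koszul complex carefully as a double complex and check that the vanishing of the $\pi_1$-differential, forced by $\pi M=0$, is precisely what produces the degree shift $n\mapsto (n,n-1)$ in the final split.
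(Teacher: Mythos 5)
Your proposal is correct and takes essentially the same approach as the paper: extend $\pi$ to a system of parameters, use the Koszul complex on it as a free resolution of the finite-length test module over both $R$ and $R/\pi R$ to obtain $\Ext^{\ell}_R(N,M)\cong\Ext^{\ell}_{R/\pi R}(N,M)\oplus\Ext^{\ell-1}_{R/\pi R}(N,M)$ (the paper records this as an identity of lengths), and conclude by Lemma \ref{LemmaBassN}. The only difference is that you make explicit the tensor-product/double-complex bookkeeping, including the vanishing $\pi$-differential, which the paper leaves implicit.
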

\begin{proof}
 Let $g_i\in R$, such that $\pi,g_1,\ldots,g_n $ form a system of parameters.
Let $J$ denote $(\pi,g_1,\ldots,g_n)R$.
Using the Koszul complex  to compute the free resolution of $R/J$ as an $R$-module and as an $R/\pi R$-module, we obtain that 

\begin{align*}
\Length(\Ext^\ell_{R}(R/J,M))& =
\Length( \Ext^\ell_{R/\pi R}(R/J,M))\\
&+\Length (\Ext^{\ell-1}_{R/\pi R}({R/J},M)).
\end{align*}
The result follows from Lemma \ref{LemmaBassN},
because $R/J$ has finite length.
\end{proof}

\begin{Lemma}\label{LemmaBassQ}
Let $R$ be a Cohen-Macaulay local ring, $M$ be an $R$-module and $\pi\in R$ be a nonzero divisor.
Let $\bar{R}$ denote $R/\pi R$. Suppose that 
$$\Dim_K\Ext^j_{\bar{R}} (K, \Ann_M (\pi)) \hbox{ and }\Dim_K\Ext^j_{\bar{R}}(K,M\otimes_R \bar{R})$$ are finite for all $j\in \NN$. Then,
$\Dim_K\Ext^j_R (K,M)$ is finite for all $j\in R$.
\end{Lemma}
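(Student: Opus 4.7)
The plan is to reduce the finiteness of $\Ext^j_R(K,M)$ to the two known quantities by splicing together the long exact sequences coming from the canonical short exact sequences
\begin{equation*}
0\to \Ann_M(\pi)\to M \FDer{\cdot\pi} \pi M\to 0
\quad\text{and}\quad
0\to \pi M\to M\to M\otimes_R\bar R\to 0.
\end{equation*}
First, since $\Ann_M(\pi)$ and $M\otimes_R\bar R$ are both annihilated by $\pi$, Lemma \ref{LemmaBCM} converts the hypothesis into the statement that $\Dim_K\Ext^j_R(K,\Ann_M(\pi))$ and $\Dim_K\Ext^j_R(K,M\otimes_R\bar R)$ are finite for every $j$.

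Applying $\Hom_R(K,-)$ to the two sequences yields exact sequences
\begin{equation*}
\Ext^j_R(K,\Ann_M(\pi))\to \Ext^j_R(K,M)\FDer{f_j}\Ext^j_R(K,\pi M)\to \Ext^{j+1}_R(K,\Ann_M(\pi))
\end{equation*}
and
\begin{equation*}
\Ext^{j-1}_R(K,M\otimes_R\bar R)\to \Ext^j_R(K,\pi M)\FDer{g_j}\Ext^j_R(K,M)\to \Ext^j_R(K,M\otimes_R\bar R),
\end{equation*}
where $f_j$ is induced by $m\mapsto \pi m$ and $g_j$ is induced by the inclusion $\pi M\hookrightarrow M$. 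The key observation is that the composite $g_j\circ f_j$ is induced by the multiplication-by-$\pi$ endomorphism of $M$; since $\pi K=0$, this endomorphism acts as zero on $\Ext^j_R(K,M)$, so $g_j\circ f_j=0$.

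Consequently $\Im f_j\subseteq \Ker g_j$, and the second long exact sequence shows $\Ker g_j$ is a quotient of $\Ext^{j-1}_R(K,M\otimes_R\bar R)$, hence finite-dimensional. The first long exact sequence then gives
\begin{equation*}
\Dim_K\Ext^j_R(K,M)\leq \Dim_K\Ext^j_R(K,\Ann_M(\pi))+\Dim_K\Im f_j,
\end{equation*}
and both terms on the right are finite, so we conclude. The only nontrivial step is the vanishing of $g_j\circ f_j$, after which the argument reduces to a direct dimension count along the two long exact sequences; I expect no serious obstacle.
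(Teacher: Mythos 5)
Your proposal is correct and follows essentially the same route as the paper: reduce via Lemma \ref{LemmaBCM}, splice the long exact sequences from $0\to \Ann_M(\pi)\to M\to \pi M\to 0$ and $0\to \pi M\to M\to M\otimes_R\bar R\to 0$, and exploit that the composite of the induced maps is multiplication by $\pi$, hence zero on $\Ext^j_R(K,M)$. The only difference is cosmetic: you bound $\Ext^j_R(K,M)$ by $\Ker f_j$ and $\Im f_j\subseteq\Ker g_j$ (getting $\Ext^j(K,\Ann_M(\pi))$ and $\Ext^{j-1}(K,M\otimes_R\bar R)$), while the paper uses $\Im(\gamma_\ell)$ and $\Im(\theta_\ell)$ (getting the $\ell+1$ and $\ell$ terms instead), which is immaterial since all degrees are finite by hypothesis.
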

\begin{proof}

We have, by Lemma \ref{LemmaBCM}, that both
$$\Dim_K\Ext^j_{R} (K, \Ann_M (\pi)) \hbox{ and }\Dim_K\Ext^j_{R}(K,M\otimes_R \bar{R})$$ are finite for all $j\in \NN$. From the short exact sequences 
$$
0\to \Ann_{M} (\pi)\to M \FDer{\pi}  \pi M\to 0$$
and
$$0\to \pi M \to M \to M\otimes_R \bar{R}\to 0, 
$$
we get two long exact sequences induced by Ext:
$$
\ldots \to \Ext^\ell_R(K,\Ann_{M} (\pi))\FDer{\alpha_{\ell}} \Ext^\ell_R(K,M) \FDer{\beta_\ell }  \Ext^\ell_R(K,\pi M)
\to\ldots,
$$
and
$$
\ldots \to \Ext^\ell_R(K,\pi M)\FDer{\gamma_\ell } \Ext^\ell_R(K,M) \FDer{\theta_\ell }  \Ext^\ell_R(K, M\otimes_R \bar{R})
\to \ldots.
$$
Since $\Im(\theta_\ell )$ injects into $\Ext^\ell_R(K, M\otimes_R \bar{R})$,  we have that $\dim_K\Im(\theta_\ell )$ is finite. 
Likewise,  $ \Coker(\beta_{\ell} )$ 
injects 
into $\Ext^{\ell+1}_R(K,\Ann_{M} (\pi)),$  and it has finite dimension over $K.$
We note that $$\Ext^\ell_R(K,\pi M)=\Ker(\beta_\ell)\oplus \Coker(\beta_\ell).$$
Since $$\gamma_\ell \circ\beta_\ell=
\Ext^{\ell}_R (K,M)\FDer{\pi} \Ext^{\ell}_R (K, M)$$
is the zero morphism for $\ell\in \NN$, 
we have that $\Im(\gamma_\ell )=\gamma_\ell (\Coker(\beta_\ell ))$.
Therefore, $\gamma_\ell(\Coker(\beta_\ell)) \to \Ext^{\ell}_R(K,M)\to \Im(\theta_\ell )\to 0$
is exact, and then $\Dim_K(\Ext^\ell_R(K,M))$ is finite.
\end{proof}

\subsection{Finiteness properties of Bass numbers of local cohomology modules}
\begin{Def}\label{DefFil}
Let $A$ be a zero dimensional Noetherian ring. Let $R$ be either $A[x_1,\ldots,x_n]$ or $A[[x_1,\ldots,x_n]]$. 
Let $M$ be an $D(R,A)$-module. We say that $M$ is $C$-\emph{filtered} if there exists a filtration 
$0=M_0\subset M_1\subset \ldots \subset M_\ell=M$ of $D(R,A)$-modules, such that $M_{i+1}/M_i$ is either zero or
\begin{itemize}
\item[\rm{(1)}] $M_{i+1}/M_i$ is annihilated by a maximal ideal $m_i\subset R$,
\item[\rm{(2)}] $M_{i+1}/M_i$ is an object in $C(R/m_iR, A/m_i)$, and
\item[\rm{(3)}] $M_{i+1}/M_i$ is a simple $D(R, A)$-module.
\end{itemize}
\end{Def}

\begin{Lemma}\label{LemmaFil}
Let $A$ be a zero dimensional Noetherian ring. Let $R$ be either $A[x_1,\ldots,x_n]$ or $A[[x_1,\ldots,x_n]]$. 
Let $M$ be an object in $C(R,A)$. Then, $M$ is $C$-filtered.
\end{Lemma}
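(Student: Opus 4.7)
The plan is to first verify the conclusion for the generating objects $R_f$ of $C(R,A)$, and then observe that the class of $C$-filtered modules is a Serre subcategory of $D(R,A)$-modules (i.e., is closed under subobjects, extensions, and quotients). Since $C(R,A)$ is by definition the smallest such subcategory containing every $R_f$, this will suffice.

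To handle $R_f$, I mimic the argument of Lemma \ref{LemmaFLDZ}. Because $A$ is Artinian, I pick a filtration of ideals $0=N_0\subset N_1\subset \ldots \subset N_\ell = A$ with each $N_{j+1}/N_j\cong A/m_j$ for some maximal ideal $m_j\subset A$. Multiplying by $R_f$ gives a $D(R,A)$-filtration of $R_f$ whose nonzero successive quotients are of the form $(R/m_jR)_f$. Each such $(R/m_jR)_f$ has finite length as a $D(R/m_jR, A/m_j)$-module by the field case stated in Section \ref{2}, so it admits a refinement with simple $D(R/m_jR, A/m_j)$-quotients. This refinement is automatically a filtration of $D(R,A)$-modules, since the $D(R,A)$-action on it factors through the surjection $\rho:D(R,A)\to D(R/m_jR, A/m_j)$. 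The successive quotients are annihilated by $m_j$, lie in $C(R/m_jR, A/m_j)$ as sub-quotients of $(R/m_jR)_f$, and are also simple as $D(R,A)$-modules, because any $D(R,A)$-submodule of a module killed by $m_j$ is automatically a $D(R/m_jR, A/m_j)$-submodule. Splicing these refinements across the outer filtration shows that $R_f$ is $C$-filtered. The crux of this step, and the only real subtlety, is precisely this compatibility between $D(R,A)$-simplicity and $D(R/m_jR, A/m_j)$-simplicity for modules annihilated by $m_j$.

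For the closure properties, extensions are routine by concatenation of filtrations. For a $D(R,A)$-submodule $N\subset M$ with $M$ $C$-filtered via $\{M_i\}$, the filtration $N_i := N\cap M_i$ has successive quotients injecting into the simple modules $M_{i+1}/M_i$, so each is either zero or isomorphic to $M_{i+1}/M_i$ and inherits conditions (1)--(3). For a quotient $M/N$, the filtration $(M_i+N)/N$ has successive quotients that are further quotients of $M_{i+1}/M_i$, hence again zero or isomorphic to it; so quotients are $C$-filtered as well. Thus the $C$-filtered modules form a Serre subcategory containing every $R_f$, which contains $C(R,A)$, completing the proof.
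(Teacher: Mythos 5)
Your proposal is correct and follows essentially the same route as the paper: first exhibit the filtration of $R_f$ induced by a composition series of the Artinian ring $A$ and refine each quotient $(R/m_jR)_f$ using finite $D(R/m_jR,A/m_j)$-length, then show the class of $C$-filtered modules is closed under subobjects, quotients, and extensions, so that it contains the smallest such subcategory $C(R,A)$. Your explicit remark that simplicity over $D(R/m_jR,A/m_j)$ and over $D(R,A)$ coincide for modules killed by $m_j$ (via surjectivity of $\rho$) is a point the paper leaves implicit, but the argument is the same.
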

\begin{proof}
We first prove the claim for $R_f$ for every $f\in R.$
Since  $A$ has finite length as an $A$-module, there is 
a finite filtration of ideals,
$0=N_0\subset N_1\subset \ldots \subset M_\ell=A,$
such that $M_{i+1}/M_i$ is isomorphic to a field, $K_i=A/m_i,$ where $m_i$ is a maximal ideal of $A$. 
Then, we have an induced filtration of $D(R,A)$-modules,
$0=N_0 R_f\subset N_1 R_f\subset \ldots \subset N_\ell R_f= R_f$. 
Thus, $N_{i+1} R_f/N_i R_f=R_f/m_i R_f$, which is an object in $C(R/m_iR, A/m_i)$. 
Then, there exists a filtration, 
$N_i=M_{i,1}\subset \ldots \subset M_{i,j_i} =N_{i+1},$ 
of objects in $C(R/m_iR, A/m_i)$, such that 
$M_{i,t+1}/M_{i,t}$ is a simple $D(R/m_iR, A/m_i)$-module. Therefore, 
$$0=M_{0,1}\subset\ldots \subset M_{0,j_1}\subset M_{1,1}\subset\ldots\subset M_{\ell,j_{\ell}}=R_f$$ 
is a filtration that makes $R_f$ a $C$-filtered module.
By the definition of $C(R,A)$, it suffices to show that if $0\to M'\FDer{\alpha} M\FDer{\beta} M'' \to 0$ is a short exact sequence of
objects in $C(R,A)$, then $M$ is $C$-filtered   
if and only if $M'$ and $M''$ are  $C$-filtered.  If $M$ is $C$-filtered with a filtration $M_i$, we define a filtration $M'_i$ in $M'$
by $M'_i=\alpha^{-1}(M_i)$. Similarly, we define a filtration $M''_i$ in $M''$
by $M''_i=\beta (M_i)$. Then, we have a short exact sequence of short exact sequences:\\

\begin{figure}[h]
$$
\hskip -15mm
\begin{array}{*{18}{c@{\,}}}

&&
0 &&
0&&
0&&\\

&&
\downarrow &&
\downarrow &&
\downarrow &\\

0 & 
\FDer{} &
M'_{i} &
\FDer{\alpha} &
M'_{i+1} &
\FDer{\beta} &
M'_{i+1}/M'_{i} &
\to &
0\\

&&
\downarrow &&
\downarrow &&
\downarrow &\\

0 & 
\FDer{} &
M_{i} &
\FDer{\alpha} &
M_{i+1} &
\FDer{\beta} &
M_{i+1}/M_{i} &
\to &
0\\

&&
\downarrow &&
\downarrow &&
\downarrow &\\

0 & 
\FDer{\alpha} &
M''_{i} &
\FDer{\beta} &
M''_{i+1} &
\to &
M''_{i+1}/M''_{i} &
\to &
0\\

&&
\downarrow &&
\downarrow &&
\downarrow &\\

&&
0 &&
0&&
0&&\\

\end{array}
$$ \label{Figure 1}
\end{figure}
Since $M_{i+1}/M_i$ is either zero or a simple $D(R,A)$-module, $M'_{i+1}/M'_i$ is either $M_{i+1}/M_i$ or zero. Likewise,
$M''_{i+1}/M''_i$ is either $M_{i+1}/M_i$ or zero. Thus, $M'_i$ and $M''_i$ satisfy parts ($1$), ($2$), and ($3$) in Definition \ref{DefFil}.

On the other hand, if $M'$ and $M''$ are $C$-filtered modules with filtrations $M'_0\subset\ldots\subset M'_{\ell'}$ and 
$M''_0\subset\ldots\subset M''_{\ell''}$, respectively, we define a filtration on $M$ by $M_i=\alpha(M'_i)$ for $i=0,\ldots \ell'$ and 
$M_i=\beta^{-1}(M'_{i-\ell'})$ for $i=\ell'+1,\ldots, \ell'+\ell''$. Since $M_{i+1}/M_i=M'_{i+1}/M'_i$ for $i=0,\ldots \ell'$ and 
$M_{i+1}/M_i=M''_{i+1-\ell'}/M''_{i-\ell'}$ for $i=\ell'+1,\ldots \ell'+\ell''$, $M_i$ 
satisfies parts ($1$), ($2$) and ($3$) in Definition \ref{DefFil}. Hence, every object in $C(R,A)$  is a $C$-filtered module.
\end{proof}

\begin{Prop}\label{PropBDZ}
Let $A$ be a zero-dimensional Noetherian ring. Let $R$ be either $A[x_1,\ldots,x_n]$ or $A[[x_1,\ldots,x_n]]$. 
Let $M$ be an object in $C(R,A)$. Then, all the Bass numbers of $M$ are finite.
In particular, this holds for $\cT(R)$ 
for every 
functor $\cT$.
\end{Prop}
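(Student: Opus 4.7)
The plan is to combine Lemma \ref{LemmaFil} with the classical finiteness results over regular rings containing a field, transferring Bass number finiteness from $\bar{R}=R/mR$ to $R$ by means of a change-of-rings spectral sequence. By Lemma \ref{LemmaFil}, $M$ admits a $C$-filtration $0=M_0\subset\cdots\subset M_\ell=M$ whose successive quotients $M_{i+1}/M_i$ are either zero or annihilated by some $m_i R$ (with $m_i\subset A$ maximal) and belong to $C(\bar R_i,\bar A_i)$, where $\bar R_i=R/m_iR$ and $\bar A_i=A/m_i$ is a field. Applying $\Ext^*_{R_P}(\kappa(P),-)$ to the short exact sequences of the filtration and using the resulting long exact sequences, an induction on $\ell$ reduces the statement to the case where $M$ itself is an object of $C(\bar R,\bar A)$ for a single such pair $(\bar R,\bar A)$.

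In this reduced setting, $\bar R$ is regular with global dimension at most $n$ and $\bar A$ is a field, so by Huneke--Sharp \cite{Huneke} and Lyubeznik \cite{LyuDMod,LyuFreeChar} every Bass number of $M$ over $\bar R$ is finite, and every $\bar R$-module has injective dimension at most $n$. For a prime $P\in\Spec(R)$ with $P\not\supset mR$ one has $M_P=0$, so there is nothing to show. For $P\supset mR$, put $\bar P=P/mR$, and note $\kappa(P)=\kappa(\bar P)$. I would then invoke the Grothendieck change-of-rings spectral sequence attached to the surjection $R_P\twoheadrightarrow\bar R_{\bar P}$ (coming from the adjunction $\Hom_{R_P}(-,M_P)\cong\Hom_{\bar R_{\bar P}}(\bar R_{\bar P}\otimes_{R_P}-,M_P)$), namely
$$E_2^{p,q}=\Ext^p_{\bar R_{\bar P}}\!\bigl(\Tor^{R_P}_q(\bar R_{\bar P},\kappa(P)),\,M_P\bigr)\;\Longrightarrow\;\Ext^{p+q}_{R_P}(\kappa(P),M_P).$$
Since $\kappa(P)$ is killed by $mR_P$, each $\Tor^{R_P}_q(\bar R_{\bar P},\kappa(P))$ is a finitely generated $\bar R_{\bar P}$-module annihilated by $\bar P\bar R_{\bar P}$, hence a finite-dimensional $\kappa(P)$-vector space. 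Each $E_2^{p,q}$ is therefore a finite direct sum of copies of $\Ext^p_{\bar R_{\bar P}}(\kappa(P),M_P)$, so is finite-dimensional. Moreover $E_2^{p,q}=0$ for $p>n$ by the injective dimension bound, so for each fixed total degree only finitely many antidiagonal terms survive, and the abutment $\Ext^N_{R_P}(\kappa(P),M_P)$ is a finite-dimensional $\kappa(P)$-vector space, which is exactly the finiteness of the Bass numbers of $M$ at $P$.

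The main obstacle is this transfer step. Because $A$ is merely zero-dimensional, $R_P$ typically has infinite global dimension and the free resolution of $\bar R_{\bar P}$ over $R_P$ is infinite, so $\Tor^{R_P}_q(\bar R_{\bar P},\kappa(P))$ can be nonzero for every $q$; a naive resolution-length argument fails. What rescues the argument is that the spectral sequence is truncated in the $p$-direction by the global dimension of the regular ring $\bar R$; this bound, combined with the finite-dimensionality of each Tor and the classical finiteness of Bass numbers over $\bar R$, keeps each antidiagonal of $E_2$ finite-dimensional and with only finitely many nonzero entries. The final assertion for a Lyubeznik functor $\cT$ is then immediate since $\cT(R)\in C(R,A)$.
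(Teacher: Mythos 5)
Your proposal is correct, and its first half --- the reduction via Lemma \ref{LemmaFil}, the long exact sequences of $\Ext$ along the $C$-filtration, and discarding primes $P\not\supset mR$ --- is exactly the paper's; where you genuinely diverge is the transfer of finiteness from $\bar R=R/mR$ to $R$ at a prime $P\supset mR$. The paper does this without spectral sequences: it chooses a system of parameters $g_1,\dots,g_d$ of $R_P$, observes that (because $\dim A=0$, so $mR_P$ is nilpotent) its image is a system of parameters of the regular local ring $\bar R_P$, and, since $R_P$ and $\bar R_P$ are Cohen--Macaulay of the same dimension, the two Koszul complexes compute $\Ext^i_{R_P}(R_P/I,N_P)$ and $\Ext^i_{\bar R_P}(\bar R_P/I\bar R_P,N_P)$ and give the same modules; Lemma \ref{LemmaBassN} (finiteness of Bass numbers is equivalent to finiteness of $\Ext$ against a single finite-length module) then turns this into the desired equivalence, the input over $\bar R$ being Corollary $8$ of \cite{LyuFreeChar}. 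You instead invoke the Cartan--Eilenberg change-of-rings spectral sequence $\Ext^p_{\bar R_{\bar P}}(\Tor^{R_P}_q(\bar R_{\bar P},K_P),M_P)\Rightarrow\Ext^{p+q}_{R_P}(K_P,M_P)$, using that each Tor is a finite-dimensional $K_P$-vector space, that $E_2^{p,q}=0$ for $p>n$ by regularity of $\bar R_{\bar P}$, and the same classical finiteness over $\bar R$; your remark about why the argument survives $\pd_{R_P}\bar R_{\bar P}=\infty$ (truncation in the $p$-direction, not the $q$-direction) is the right diagnosis. The trade-off: the paper's route is more elementary and stays within Koszul complexes plus Lemma \ref{LemmaBassN}, but needs the Cohen--Macaulay/system-of-parameters bookkeeping; yours needs only finite global dimension of $\bar R_{\bar P}$ and finite generation of the Tor modules, at the price of heavier homological machinery, and both ultimately rest on the same external input, the finiteness of Bass numbers of objects of $C(\bar R,\bar A)$ over the regular ring $\bar R$ containing a field.
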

\begin{proof}
We fix a prime ideal $P\subset R$ and denote $R_P/PR_P$ by $K_P$.
Since $M$ is a $C$-filtered module by Lemma \ref{LemmaFil}, we have a filtration $0=M_0\subset\ldots \subset M_\ell=M$ such that 
$M_{i+1}/M_i$ is annulled by a maximal ideal $m_i\subset R$, is an object in $C(R/m_iR, A/m_i)$, and is a simple $D(R, A)$-module.
From the short exact sequences $0\to M_j \to M_{j+1}\to M_{j+1}/M_j\to 0 $, we get long exact sequences
$$
\ldots\to \Ext^j_{R_P} (K_P, (M_i)_P) \to \Ext^j_{R_P} (K_P,(M_{i+1})_P)$$
$$\to \Ext^j_{R_P} (K_P,(M_{i+1}/M_i)_P)\to \Ext^{j+1}_{R_P} (K_P, (M_i)_P) \to\ldots. 
$$
Then, it suffices to show the claim for $M_{i+1}/M_{i}$ for $i=0,\ldots, \ell-1$.
We fix $i$ and denote $M_{i+1}/M_{i}$ by $N$.
Let $m\subset A$ be the maximal ideal such that  $m N=0$. If $m R\not\subset P$, 
then $N\otimes R_P=0$. We may assume that $mR\subset P$. Let $\bar{R}=R/mR.$ We note that $\bar{R}$ is a regular ring containing  $A/m$, a field.
Let $g_1,\ldots, g_d$ be a system of parameters for $R_P,$ and let $f_1,\ldots, f_d$ be
the class of $g_1,\ldots,g_d$ in $\bar{R}_P$. Since $A$ is a zero dimensional ring, we have that $f_1,\ldots, f_d$ is a system of parameters for $\bar{R}_P$.
Let $I=(g_1,\ldots, g_d)R_P$. Using the Koszul complex $\cK$, we obtain that,
$$
\Ext^i_{R_P} (R_P/I, N_P)=H^i(\Hom_{R_P}(\cK (\underline{g}), N_P))
$$ $$
=H^i(\Hom_{\bar{R}_P}(\cK (\underline{f}), N_P))
=\Ext^i_{\bar{R}_P} (\bar{R}_P/I\bar{R}_P, N_P),
$$
because $R_P$ and $\bar{R}_P$ are Cohen-Macaulay rings of the same dimension.
Using Lemma \ref{LemmaBassN} several times, we obtain that
\begin{align*}
\Length_{\bar{R}_P} \Ext^i_{\bar{R}_P} (K_P, N_P)< \infty 
& \Leftrightarrow 
\Length_{\bar{R}_P} \Ext^i_{\bar{R}_P} (\bar{R}_P/I\bar{R}_P, N_P) <\infty \\
&\Leftrightarrow 
\Length_{\bar{R}_P} H^i\Hom_{\bar{R}_P}(\cK (\underline{f}), N_P) <\infty \\
& \Leftrightarrow 
 \Length_{R_P} H^i\Hom_{R_P}(\cK(\underline{g}), N_P)< \infty \\
& \Leftrightarrow 
 \Length_{R_P} \Ext^i_{R_P} (R_P/I, N_P)< \infty \\
& \Leftrightarrow 
 \Length_{R_P} \Ext^i_{R_P} (K_P, N_P)< \infty \\
\end{align*}
Since $\Length_{\bar{R}_P} \Ext^i_{\bar{R}_P} (K_P, N_P)< \infty$ by Corollary $8$ in \cite{LyuFreeChar}, we have that
$ \Length_{R_P} \Ext^i_{R_P} (K_P, N_P)< \infty$. Hence, all the Bass numbers of $M$ are finite. 
\end{proof}
\begin{proof}[Proof of Theorem \ref{TeoZero}]
This is a consequence of Proposition \ref{PropAssDZ} and Proposition \ref{PropBDZ}.
\end{proof}
\begin{Prop}\label{PropBassDO}
Let $A$ be a Noetherian Cohen-Macaulay ring such that $\dim(A)=1$, 
and let $\pi\in A$ be a nonzero divisor. Let $R$ be either $A[x_1,\ldots,x_n]$ or $A[[x_1,\ldots,x_n]]$.
Then, all the Bass numbers of  $\cT(R)$, as an $R$-module, with respect to a prime ideal $P$ containing $\pi,$ are finite.
\end{Prop}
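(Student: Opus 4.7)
The plan is to assemble the ingredients already prepared in the earlier sections: Lemma \ref{LemmaBassQ} (which converts finiteness of Bass numbers modulo $\pi$ into finiteness upstairs under the Cohen-Macaulay hypothesis), Lemma \ref{LemmaAQFL} (which guarantees that $\Ann_{\cT(R)}(\pi)$ and $\cT(R)\otimes_R \bar{R}$ remain in the category $C(\bar{R},\bar{A})$), and Proposition \ref{PropBDZ} (which provides finiteness of Bass numbers for objects in $C(\bar{R},\bar{A})$, where $\bar{A}=A/\pi A$ is zero-dimensional).

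First I would fix a prime ideal $P\subset R$ with $\pi\in P$, write $\bar{R}=R/\pi R$, and denote by $\bar{P}$ the image of $P$ in $\bar{R}$. Note that since $A$ is Cohen-Macaulay of dimension one and $R$ is either a polynomial or a power series ring over $A$, the ring $R$ (and hence $R_P$) is Cohen-Macaulay. Moreover, since $\pi$ is a nonzero divisor in $A$, it remains a nonzero divisor in $R$ and in $R_P$. The residue field $K_P=R_P/PR_P$ coincides with $\bar{R}_{\bar{P}}/\bar{P}\bar{R}_{\bar{P}}$.

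Next I would apply Lemma \ref{LemmaAQFL} with $M=R$: since $\Ann_R(\pi)=0$ and $R\otimes_R\bar{R}=\bar{R}$ are trivially objects in $C(\bar{R},\bar{A})$, the lemma yields that $\Ann_{\cT(R)}(\pi)$ and $\cT(R)\otimes_R \bar{R}$ are objects in $C(\bar{R},\bar{A})$. Since $\bar{A}$ has dimension zero, Proposition \ref{PropBDZ} applies to give that all Bass numbers of both $\Ann_{\cT(R)}(\pi)$ and $\cT(R)\otimes_R \bar{R}$ over $\bar{R}$ are finite. Localizing at $\bar{P}$ we conclude, in particular, that
\[
\Dim_{K_P}\Ext^j_{\bar{R}_{\bar{P}}}\bigl(K_P,\Ann_{\cT(R)_P}(\pi)\bigr)\quad\text{and}\quad \Dim_{K_P}\Ext^j_{\bar{R}_{\bar{P}}}\bigl(K_P,\cT(R)_P\otimes_{R_P}\bar{R}_{\bar{P}}\bigr)
\]
are finite for every $j\in\NN$.

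Finally, I would apply Lemma \ref{LemmaBassQ} to the Cohen-Macaulay local ring $R_P$, the nonzero divisor $\pi\in R_P$, and the module $M=\cT(R)_P$. The two hypotheses of that lemma are exactly what was verified in the previous step, so the conclusion is that $\Dim_{K_P}\Ext^j_{R_P}(K_P,\cT(R)_P)$ is finite for every $j$, which is precisely the statement that the Bass numbers of $\cT(R)$ with respect to $P$ are finite. There is no real obstacle in the argument; the only point requiring some attention is the bookkeeping that $R_P$ inherits the Cohen-Macaulay property from $A$ and that $\pi$ persists as a nonzero divisor after localization, both of which are standard.
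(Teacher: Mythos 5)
Your proof is correct and follows essentially the same route as the paper: apply Lemma \ref{LemmaAQFL} with $M=R$ to place $\Ann_{\cT(R)}(\pi)$ and $\cT(R)\otimes_R\bar{R}$ in $C(\bar{R},\bar{A})$, invoke Proposition \ref{PropBDZ} for the zero-dimensional quotient, and lift the finiteness to $R_P$ via the Cohen--Macaulay Ext comparison. Your citation of Lemma \ref{LemmaBassQ} at the last step is in fact the more precise reference (the paper cites Lemma \ref{LemmaBCM}, which Lemma \ref{LemmaBassQ} subsumes), and your localization bookkeeping is fine.
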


\begin{proof}
Let $\bar{R}$ and $\bar{A}$ denote $R/\pi R$ and $A/\pi A,$ respectively. 
We have that $\Ann_{\cT(R)} (\pi)$ and $\cT (R)\otimes \bar{R}$ 
are objects in $C(\bar{R},\bar{A})$ by Lemma \ref{LemmaAQFL}. 
Then, their Bass numbers, as $\bar{R}$-modules, with respect to  
$P$ are finite by Proposition \ref{PropBDZ}. Since $R_P$ and $\bar{R}_P$ are Cohen-Macaulay rings, we have that the Bass
numbers of $\cT(R)$ with respect to $P$ are finite by Lemma \ref{LemmaBCM} for every functor $\cT$.
\end{proof}
We claim that we cannot generalize Proposition \ref{PropBassDO} for Cohen-Macaulay rings of dimension higher than $3$. 
Let $A=K[[s,t,u,v]]/(us+vt)$, where $K$ is a field. This is the ring given by Hartshorne's example \cite{Ha}. 
Let $I=(s,t)A$. Hartshorne showed that
$\dim_K \Hom_A(K,H^2_I(A)) $ is not finite.

Let $R$ be either $A[x_1,\ldots,x_n]$ or $A[[x_1,\ldots,x_n]]$. 
Let $P=mR$ be the prime ideal generated by $m=(s,t,u,v)A,$ the maximal ideal in $A.$
Then,

$$\Ext^0_R(R/P,H^2_I(R))=\Hom_R(R/P,H^2_I(R))$$
$$=\Hom_A(K,H^2_I(A))\otimes_A R=\oplus R/mR,$$
where the direct sum in the last equality is infinite. Therefore,
$$\dim_{R_P/mR_P}\Ext^0_{R_P}(R_P/PR_P,H^2_I(R_P))$$ is not finite.
It is worth noticing that $H^2_I(A)$ is simple as $D(A,K)$-module \cite{Hsiao}.

The only case in which Proposition \ref{PropBassDO} could generalize is for Cohen-Macaulay rings of dimension $2.$
This would be helpful to study the local cohomology over $$\frac{V[[x,y,z_1,\ldots,z_n]]}{(\pi-xy)V[[x,y,z_1,\ldots,z_n]]}=\left(\frac{V[[x,y]]}{(\pi - xy)V[[x,y]]}\right)[[z_1,\ldots,z_n]],$$
where $(V,\pi V,K)$ is a complete DVR of mixed characteristic.
To the best of our knowledge, this is the simplest example of a regular local 
ring of ramified mixed characteristic where the finiteness of $\Ass_RH^i_I(R)$ is an open question.

\begin{Cor}\label{CorBassPoly}
Let $A$ be a one-dimensional local Cohen-Macaulay ring, and let $R$ be $A[x_1,\ldots,x_n]$.
Then, all the Bass numbers of  $\cT(R)$, as an $R$-module, are finite.
\end{Cor}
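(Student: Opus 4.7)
The plan is to mimic the strategy of Corollary \ref{CorAssPoly} but for Bass numbers, splitting primes of $R$ into those that contain a parameter of $A$ and those that do not, and handling the two cases separately with the propositions already at hand.

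First I would fix a parameter $\pi \in A$; since $A$ is a one-dimensional Cohen-Macaulay local ring, $\pi$ is automatically a nonzero divisor. For any prime $P \subset R$ that contains $\pi$, Proposition \ref{PropBassDO} applies directly and gives that all Bass numbers $\mu^j(P,\cT(R))$ are finite. So the work reduces to primes $P \subset R$ with $\pi \notin P$.

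For such primes, I would pass to the localization $R_\pi = A_\pi[x_1,\ldots,x_n]$. Since $\dim(A) = 1$ and $\pi$ is a parameter, $A_\pi$ is a zero-dimensional Noetherian ring. Lyubeznik functors commute with localization in the sense that $\cT(R)_\pi \cong \cT(R_\pi)$ (the \v{C}ech complex that computes each $H^i_Z$ commutes with the flat base change $R \to R_\pi$, and this property is inherited by kernels, images, cokernels, and compositions), so $\cT(R)_\pi$ is an object in $C(R_\pi, A_\pi)$. Therefore Proposition \ref{PropBDZ} applies and guarantees that every Bass number of $\cT(R)_\pi$ over $R_\pi$ is finite.

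The final step is to transfer this back to $R$. For a prime $P \subset R$ with $\pi \notin P$, one has $R_P = (R_\pi)_{PR_\pi}$, and Bass numbers are defined via $\Ext^j_{R_P}(K(P), \cT(R)_P)$, which equals $\Ext^j_{(R_\pi)_{PR_\pi}}(K(PR_\pi), \cT(R_\pi)_{PR_\pi})$. Thus $\mu^j(P,\cT(R)) = \mu^j(PR_\pi, \cT(R_\pi))$ is finite by the previous paragraph, and combined with the case $\pi \in P$ this proves that all Bass numbers of $\cT(R)$ are finite. I do not expect a serious obstacle here: the only delicate point is the identification $\cT(R)_\pi \cong \cT(R_\pi)$, which is standard for Lyubeznik functors because local cohomology commutes with flat base change and the construction of $\cT$ only involves kernels, cokernels, and images of maps between such local cohomologies.
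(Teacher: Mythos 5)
Your proposal is correct and follows essentially the same route as the paper: split the primes of $R$ according to whether they contain a parameter $\pi$ of $A$, apply Proposition \ref{PropBassDO} in the first case, and use $R_\pi = A_\pi[x_1,\ldots,x_n]$ with $\dim(A_\pi)=0$ together with Proposition \ref{PropBDZ} in the second. The extra details you supply (that $\pi$ is a nonzero divisor by the Cohen--Macaulay hypothesis, and the compatibility $\cT(R)_\pi \cong \cT(R_\pi)$ with flat localization) are exactly what the paper leaves implicit.
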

\begin{proof}
Let $\pi$ be a parameter for $A$.
Then, the Bass numbers of  $\cT (R)$ with respect to a prime ideal containing $\pi,$ are finite by Proposition 
\ref{PropBassDO}.
Since $R_\pi=A_\pi[x_1,\ldots,x_n]$ and $\Dim(A_\pi)=0$, 
the Bass numbers of  $\cT (R)$ with respect to a prime ideal that does not contain $\pi,$ 
are finite by Proposition \ref{PropBDZ}.
\end{proof}

\begin{Cor}\label{CorBassSeries}
Let $A$ be a one-dimensional local domain, and let $R$ be $A[[x_1,\ldots,x_n]]$.
Then, all the Bass numbers of  $\cT(R)$, as an $R$-module, are finite.
\end{Cor}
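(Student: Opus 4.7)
The plan is to mimic the strategy of Corollary \ref{CorBassPoly} and Corollary \ref{CorAssSeries}, splitting the set of prime ideals of $R$ according to whether or not they contain a parameter $\pi$ of $A$. Since $A$ is a one-dimensional local domain, $A$ is Cohen-Macaulay and any nonzero element of the maximal ideal is a nonzero divisor; fix such a $\pi$. The Bass numbers of $\cT(R)$ at a prime $P$ containing $\pi$ are then finite by a direct application of Proposition \ref{PropBassDO}, since its hypotheses are satisfied.

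For a prime $P\subset R$ with $\pi\notin P$, I would pass to $R_\pi$: the Bass numbers of $\cT(R)$ at $P$ equal those of $\cT(R)_\pi=\cT(R_\pi)$ at $PR_\pi$, because $\cT$ commutes with localization. By Theorem $5.1.2$ in \cite{Gro}, $R_\pi$ is a regular ring (it is the generic fiber over $A$). Thus the task reduces to showing that Lyubeznik functors applied to a suitable regular ring have finite Bass numbers at every prime.

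I would then follow the dichotomy used in the proof of Corollary \ref{CorAssSeries}. If $\Char A=p>0$, then $R_\pi$ is a regular ring of characteristic $p$, so Corollary $2.14$ in \cite{LyuFMod} yields finiteness of the Bass numbers of $\cT(R_\pi)$ via Lyubeznik's theory of $F$-modules. If $\Char A=0$, let $F=A_\pi$, a field of characteristic zero, and $S=R_\pi=F\otimes_A R$. The argument in Corollary \ref{CorAssSeries} (lifting variables from the residual regular system in $\bar R=R/mR$ and using $m$-adic completeness of $R/P$) shows that $S$ and $F$ satisfy properties (i), (ii), and (iii): $S$ is equidimensional of dimension $n$, every residue field at a maximal ideal is algebraic over $F$, and the standard derivations $\partial/\partial x_i$ provide the required $F$-linear derivations together with coordinates $x_i$. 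With these properties in hand, the $D$-module finiteness results of \cite{MeNa}, applied in the form carried out in \cite{LyuDMod} and stated explicitly as Theorem $4.4$ in \cite{Nunez}, yield finiteness of the Bass numbers of $\cT(S)$ at every prime.

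The main obstacle is really the characteristic-zero case for power series, which is not directly handled by the methods of Sections $3$--$4$ of this paper because the variables $x_i$ get ``scrambled'' after inverting $\pi$ (so $R_\pi$ is not a power series ring over $A_\pi$). Fortunately, the verification of (i)--(iii) is exactly what was carried out in the proof of Corollary \ref{CorAssSeries}, so I would simply invoke that argument verbatim, observing that the $D$-module machinery it feeds into produces Bass-number finiteness in addition to associated-prime finiteness.
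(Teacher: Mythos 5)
Your proposal is correct in substance, and it agrees with the paper on the decomposition (primes containing $\pi$ handled by Proposition \ref{PropBassDO}, the rest by passing to $R_\pi$, which is regular by Theorem $5.1.2$ in \cite{Gro}), but it handles the second case by a genuinely heavier route than the paper does. The paper exploits the fact that a Bass number is a purely local invariant: for $P\not\ni\pi$ one localizes all the way at $P$, obtaining a regular \emph{local} ring containing the field $A_\pi$, and then quotes the classical local theorems directly --- Theorem $2.1$ in \cite{Huneke} in characteristic $p$ and Theorem $3.4$ in \cite{LyuDMod} in equal characteristic zero. No verification of conditions (i)--(iii) and no global $D$-module theory over the non-local ring $R_\pi$ is needed; that elaborate argument is required in Corollary \ref{CorAssSeries} only because finiteness of $\Ass$ is a global statement about $R_\pi$, whereas here the question is prime-by-prime. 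Your route --- rerunning the proof of Corollary \ref{CorAssSeries} to verify (i)--(iii) and then invoking Corollary $2.14$ in \cite{LyuFMod} and the machinery of \cite{MeNa} via Theorem $4.4$ in \cite{Nunez} --- can be made to work, since those theories do yield finite Bass numbers as well, but you should note that the paper cites those particular statements only for finiteness of associated primes, so you would need to check (or cite) the Bass-number versions explicitly; the local reduction makes this unnecessary and is the cleaner argument.
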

\begin{proof}
Let $\pi$ be a parameter for $A$.
Then, the Bass numbers of  $\cT (R)$ with respect to a prime ideal $P$ containing $\pi,$ are finite by Proposition 
\ref{PropBassDO}.

On the other hand, the Bass numbers of $\cT (R)$ with respect to prime ideals not containing $\pi,$
are in correspondence with the Bass numbers of $R_\pi.$
We have  that $R_\pi$ is a regular ring that contains a field, $A_\pi,$ by  Theorem $5.1.2$ in  \cite{Gro}
because $R_\pi$ is the fiber at the zero prime ideal of $A.$
Then the result follows from Theorem $2.1$ in \cite{Huneke} and Theorem $3.4$ in  \cite{LyuDMod}.
\end{proof}

\begin{proof}[Proof of Theorem \ref{TeoOne}]
This is a consequence of Proposition \ref{PropAssDO} and Proposition \ref{PropBassDO}.
\end{proof}

\section{Local cohomology of unramified regular rings}
\label{5}

As consequence of the results in Sections \ref{3} and \ref{4}, we are able to give a different proof for some parts of 
Theorem $1$ in \cite{LyuUMC}.

\begin{Teo}
Let $(R,m,K)$ be an unramified regular local ring and $p=Char(K)$. Then:
\begin{itemize}
\item[\rm{(i)}]  the Bass numbers of $\cT(R)$ are finite, and
\item[\rm{(ii)}] the set of associated primes of $\cT(R)$ that contain $p$ is finite
\end{itemize}
for every Lyubeznik functor $\cT$.
\end{Teo}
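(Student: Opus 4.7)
The plan is to split the argument by the characteristic of $R$ and, in the mixed characteristic case, to reduce to the completion so that Corollaries~\ref{CorAssSeries} and~\ref{CorBassSeries} apply directly.

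If $R$ contains a field, then $p=0$ in $R$, so (ii) is just finiteness of $\Ass_R \cT(R)$, and both (i) and (ii) follow at once: from Huneke--Sharp \cite{Huneke} when $\Char R = p > 0$, and from Lyubeznik \cite{LyuDMod} when $\Char R = 0$.

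In the remaining case, $R$ has mixed characteristic, so $p\in m\setminus m^2$ by unramifiedness. I would pass to the $m$-adic completion $\hat R$. Faithful flatness of $R\to\hat R$, together with the fact that every Lyubeznik functor commutes with flat base change, gives $\hat R\otimes_R \cT(R)\cong\cT(\hat R)$, where on the right $\cT$ denotes the Lyubeznik functor on $\hat R$ induced by extension of the closed subsets defining $\cT$. The standard descent formula
$$
\Ass_{\hat R}(\hat R\otimes_R\cT(R))=\bigcup_{P\in\Ass_R \cT(R)}\Ass_{\hat R}(\hat R/P\hat R),
$$
together with the behavior of $\Ext$ groups under the faithfully flat local homomorphisms $R_P\to\hat R_Q$ for $Q$ lying over $P$, would reduce (i) and (ii) to the analogous statements for $\hat R$.

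To conclude, I would invoke Cohen's structure theorem to identify $\hat R\cong V[[x_1,\dots,x_n]]$, where $V$ is a complete discrete valuation ring with uniformizer $p$. Since $V$ is a one-dimensional local domain, Corollary~\ref{CorAssSeries} applied to $\hat R$ yields finiteness of $\Ass_{\hat R}\cT(\hat R)$, proving (ii), and Corollary~\ref{CorBassSeries} applied to $\hat R$ yields finiteness of all Bass numbers of $\cT(\hat R)$ as an $\hat R$-module, proving (i). The main obstacle will be the bookkeeping of the faithfully flat descent, especially for Bass numbers; once this standard point is cleared, the essential content of the theorem lies in Corollaries~\ref{CorAssSeries} and~\ref{CorBassSeries}.
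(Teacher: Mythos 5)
Your proposal is correct in substance, but it takes a genuinely different route from the paper's proof, and it leans on heavier ingredients. For (ii), the paper completes $R$ globally and then applies only Proposition~\ref{PropAssDO} with $\pi=p$ (finiteness of associated primes \emph{containing} $\pi$), which keeps this part self-contained; you instead invoke Corollary~\ref{CorAssSeries}, which proves the stronger statement that all of $\Ass_{\hat R}\cT(\hat R)$ is finite but whose proof imports the characteristic-zero $D$-module machinery on the generic fiber (\cite{MeNa}, \cite{Nunez}), precisely the kind of input the paper's argument for (ii) avoids. For (i), the paper does not complete globally: it fixes a prime $P$, splits on whether $p\in P$, uses Theorem~$3.4$ of \cite{LyuDMod} when $p\notin P$ (since $R_P$ is equicharacteristic zero), and when $p\in P$ completes the \emph{localization} $R_P$, so that the relevant prime becomes the maximal ideal and Proposition~\ref{PropBassDO} applies directly with no descent bookkeeping; the Bass number is visibly unchanged because one only compares $\Ext$ into the residue field before and after a flat local completion. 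Your route instead descends all Bass numbers from $\hat R\cong V[[x_1,\dots,x_n]]$ via Corollary~\ref{CorBassSeries}; this does work, and the flagged bookkeeping is standard (choose $Q$ minimal over $P\hat R$, use flat base change for $\Ext$ against the finitely presented module $\kappa(P)$, note that the closed fiber $\hat R_Q/P\hat R_Q$ is Artinian, and apply Lemma~\ref{LemmaBassN} upstairs and faithful flatness to conclude downstairs), but it is exactly the step the paper's per-prime localization is designed to sidestep. Two small cautions: in the equicharacteristic $p>0$ case, Huneke--Sharp \cite{Huneke} is stated for $H^i_I(R)$, so for an arbitrary Lyubeznik functor $\cT$ you should cite Lyubeznik's $F$-module results (e.g.\ \cite{LyuFMod}) as the paper does elsewhere; and for the descent of associated primes only the easy containment (each $P\in\Ass_R\cT(R)$ is the contraction of some $Q\in\Ass_{\hat R}\cT(\hat R)$) is needed, so you need not rely on the full Matsumura-type formula for the non-finitely-generated module $\cT(R)$.
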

\begin{proof}
The finiteness of associated primes of $\cT(R)$ that contain $p$ is not
affected by completion with respect to the maximal ideal. 
Since completion of $R$ respect to $m$ is a power series 
ring over a complete DVR  of mixed characteristic, the result follows from Proposition
\ref{PropAssDO}.

In order to prove the finiteness of the Bass numbers,
we need to show that $\Dim_{R_P/PR_P}\Ext^j_{R_P} (R_P/PR_P,\cT(R_P) )$ is finite
for every prime ideal $P\subset R$. 
There are two cases: $p\in P$ or not. If $p \not\in P$ then $R_P$ has equal 
characteristic $0$ and the result follows from Theorem $3.4$ in \cite{LyuDMod}.
If $p\in P$, $R_P$ is an unramified regular local ring, and 
its completion of $R_P$ respect to the maximal ideal is a power series 
ring over a complete DVR  of mixed characteristic. Since 
the dimension of  $\Ext^\ell_{R_P} (R_P/PR_P,\cT(R_P) )$
 as $R_P/PR_P$-vector space is not affected by completion, the result follows from 
Corollary \ref{PropBassDO}.
\end{proof}

\section{Injective Dimension}
\label{6}

In this section, we recover and generalize some results of Zhou about injective dimension \cite{Zhou}. 
\begin{Lemma}\label{LemmaExt}
Let $(A,m,K)$ be a regular local ring and let $R$ denote either
$A[[x_1,\ldots, x_n]]$ or $A[x_1,\ldots, x_n]$. 
Let $P\subset R$ be a prime ideal containing $mR$ and let $K_P$ denote the field $R_P/PR_P$.
Let $M$ be a  $D(R,A)$-module. Then, $\Ext^\ell_R (K_P,M_P)=0$ for $\ell > \Dim(A)+\Dim(\Supp_R (M))$.
\end{Lemma}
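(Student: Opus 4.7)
The plan is to induct on $d = \Dim(A)$, using the $\Ann_M(\pi)$/$M \otimes_R \bar{R}$ decomposition already exploited in Lemma \ref{LemmaAQFL} and Lemma \ref{LemmaBassQ}.

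For the base case $d = 0$, $A$ is a field $K$, the hypothesis $P \supseteq mR$ is vacuous, and the claim collapses to $\InjDim_R M \le \Dim \Supp_R M$ for every $D(R, K)$-module $M$ over a polynomial or power series ring over $K$. I would close this step by quoting the equal-characteristic $D$-module literature (Huneke--Sharp in characteristic $p > 0$, Lyubeznik in characteristic zero, and Zhou's Theorem $5.1$(a) in the power series case).

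For the inductive step, assume the bound in dimension $< d$, and pick $\pi \in m \setminus m^2$, so that $\bar{A} = A/\pi A$ is regular local of dimension $d - 1$ and $\pi$ is a nonzero divisor on $R$. Write $\bar{R} = R/\pi R$ and $\bar{P} = P/\pi R$; note $\bar{R}_{\bar{P}} = R_P/\pi R_P$ and $K_{\bar{P}} = K_P$. Both $\Ann_M(\pi)$ and $M \otimes_R \bar{R}$ are $\pi$-torsion and inherit $D(\bar{R}, \bar{A})$-module structures via the surjection $D(R, A) \to D(\bar{R}, \bar{A})$, with supports over $\bar{R}$ contained in $\Supp_R M$. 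The inductive hypothesis applied to $(\bar{A}, \bar{R})$ gives
$$\Ext^\ell_{\bar{R}_{\bar{P}}}(K_{\bar{P}}, N_{\bar{P}}) = 0 \quad \text{for } \ell > (d-1) + \Dim \Supp_R M$$
for $N = \Ann_M(\pi)$ and $N = M \otimes_R \bar{R}$. The change-of-rings identity
$$\Ext^\ell_{R_P}(K_P, N_P) \cong \Ext^\ell_{\bar{R}_P}(K_P, N_P) \oplus \Ext^{\ell-1}_{\bar{R}_P}(K_P, N_P)$$
valid for $N$ with $\pi N = 0$ (cf.\ Lemma \ref{LemmaBCM}) then upgrades this to $\Ext^\ell_{R_P}(K_P, N_P) = 0$ for $\ell > d + \Dim \Supp_R M$.

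Finally, feeding these vanishings into the long exact sequences coming from $0 \to \Ann_M(\pi) \to M \to \pi M \to 0$ and $0 \to \pi M \to M \to M \otimes_R \bar{R} \to 0$, and using that multiplication by $\pi$ is zero on $\Ext^\ell_{R_P}(K_P, M_P)$ (since $\pi \in P$), I would run exactly the chase of Lemma \ref{LemmaBassQ}: the induced map $\beta_\ell : \Ext^\ell_{R_P}(K_P, M_P) \to \Ext^\ell_{R_P}(K_P, \pi M_P)$ becomes an isomorphism, $\gamma_\ell : \Ext^\ell_{R_P}(K_P, \pi M_P) \to \Ext^\ell_{R_P}(K_P, M_P)$ becomes surjective, and $\gamma_\ell \circ \beta_\ell = 0$ forces $\gamma_\ell = 0$, hence $\Ext^\ell_{R_P}(K_P, M_P) = 0$ for $\ell > d + \Dim \Supp_R M$. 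The main obstacle will be the base case, since the statement applies to every $D(R, A)$-module and not only those in $C(R, A)$; once that foundational injective-dimension result over regular rings containing a field is in hand, the inductive mechanism is a direct adaptation of the techniques already used in Sections \ref{3} and \ref{4}.
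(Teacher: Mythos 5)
Your inductive mechanism is the same as the paper's proof of Lemma \ref{LemmaExt}: induct on $d=\dim(A)$, kill a minimal generator of $m$ (the paper uses $y_d$ from a minimal generating set, you use $\pi\in m\setminus m^2$ -- same thing), observe that $\Ann_M(\pi)$ and $M/\pi M$ are $D(\bar R,\bar A)$-modules, transfer the inductive vanishing from $\bar R_P$ to $R_P$ by the Koszul change-of-rings identity (as in Lemma \ref{LemmaBCM}), and finish with the two long exact sequences and the observation that multiplication by $\pi\in P$ is zero on $\Ext^\ell_{R_P}(K_P,M_P)$, exactly the chase of Lemma \ref{LemmaBassQ}. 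The one point where your plan diverges, and where it would not close as written, is the base case, which you yourself flag as the main obstacle: for $d=0$ you need $\InjDim_R M\le\dim\Supp_R M$ for \emph{every} $D(R,K)$-module over a polynomial or power series ring over a field, but the results you propose to quote do not give this -- Huneke--Sharp \cite{Huneke} and the characteristic-zero results of \cite{LyuDMod} concern local cohomology modules (more generally $F$-modules, respectively holonomic modules), and Zhou's theorem is likewise not stated for an arbitrary $D(R,K)$-module; an arbitrary $D$-module need not be holonomic or carry a Frobenius structure. The paper closes this step by citing the first theorem of \cite{LyuInjDim}, Lyubeznik's characteristic-free bound on the injective dimension of arbitrary $D(R,K)$-modules, which is precisely the statement you identified as missing; with that reference substituted, your argument coincides with the paper's.
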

\begin{proof}
The proof will be by induction on $d=\dim(A)$. 
If $d=0$, then $A=K$ is a field, and the proof follows from the first Theorem in \cite{LyuInjDim}.
We assume that the claim is true for $d-1$. 
Let $y_1,\ldots, y_d$ denote a minimal set of generator for $m$.
Let $\bar{A}=A/y_d A,$ $\bar{R}=R/y_d R=\bar{A}[[x_1,\dots,x_n]]$ and $\bar{P}=P\bar{R}$. 
 Let $\bar{y}_1,\ldots, \bar{y}_{d-1}$ be the class of $y_1,\ldots, y_{d-1}$ in $\bar{R}$. 
We note that $\bar{P}\subset \bar{R}$ is a prime ideal, which contains $m\bar{R}$.
Let $f_1,\ldots f_s\in P$ be such that $y_1,\ldots, y_{d-1},f_1,\ldots f_s$ form a minimal set of generator for the 
maximal ideal $PR_{P}$.
From the Koszul complex associated to  $y_1,\ldots, y_{d-1},f_1,\ldots f_s$  in $R_{P}$, we have that for every $\bar{R}_{P}$-module, $N$,
$$\Dim_{K_P}\Ext^\ell_{R_P}(K_P, N)=
\Dim_{K_P} \Ext^\ell_{\bar{R}_{P}}(K_P, N)+\Dim_{K_P} \Ext^{\ell-1}_{\bar{R}_{P}}(K_P, N ).$$
In this case, we have that $\Ann_M (y_d)$ and $M/y_d M$ are 
$D(\bar{R}, \bar{A}))$-modules. By the induction
hypothesis, 
$$\Ext^{\ell}_{\bar{R}_{\bar{P}}}(K_P,\Ann_{M_P} (y_d))=0 \hbox{ and }\Ext^{\ell}_{\bar{R}_{\bar{P}}}(K, M_P/y_d M_P)=0$$  for $\ell >d+\Dim(\Supp_R (M))-1=
\Dim(\bar{A})+\Dim(\Supp_{R} (M))$. Then, 
$$\Ext^{\ell}_{R_P}(K_P,\Ann_{M_P} (y_d))=0\hbox{ and }
\Ext^{\ell}_{R_P}(K, M_P/y_d M_P)=0$$  for $\ell >d+\Dim(\Supp_R (M)).$

From the short exact sequences 
$$
0\to \Ann_{M_P} (y_d)\to M_P \FDer{y_d}  y_d M_P\to 0$$
and
$$0\to y_d M_P \to M_P \to M_P\otimes_R \bar{R}\to 0, 
$$
we get two long exact sequences induced by Ext:

$$
\ldots \to \Ext^\ell_{R_P}(K_P,\Ann_{M_P} (y_d))
\to \Ext^\ell_{R_P}(K_P,M_P)$$ $$\FDer{\rho_\ell }  \Ext^\ell_{R_P}(K_P,y_d M_P)
\to\ldots
$$
and
$$
\ldots \to \Ext^\ell_{R_P}(K_P,y_d M_P)\FDer{\theta_\ell } \Ext^\ell_{R_P}(K_P,M_P)$$
$$ \FDer{\varrho_\ell }  \Ext^\ell_{R_P}(K_P, M_P\otimes_R \bar{R})
\to \ldots
$$

Thus, $ \rho_\ell$ is an isomorphism and $\theta_\ell$ is surjective for $\ell>d+\Dim(\Supp_R (M))$. Then, $\theta_\ell \circ\rho$
is surjective for $\ell>d+\Dim(\Supp_R (M))$.
Since $$\theta_\ell \circ\rho_\ell=
\Ext^j_{R_P} (K_P,M_P)\FDer{y_d} \Ext^j_{R_P} (K_P, M_P)$$ 
is the zero morphism,  $ \Ext^j_{R_P} (K_P, K_P)=0$ for $\ell >d+\Dim ( \Supp_R M)$.
\end{proof}

\begin{Prop}\label{PropInjDimPol}
Let $A$ be a Noetherian ring and let $R=A[x_1,\ldots, x_n]$. 
Let $P\subset R$ be a prime ideal, and let $K_P$ denote the field $R_P/PR_P$.
Let $M$ be a  $D(R,A)$-module. Then, $\Ext^\ell_R (K_P,M_P)=0$ for $\ell > \Dim(A)+\Dim(\Supp_R M)$.
\end{Prop}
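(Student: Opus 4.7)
My plan is to reduce to Lemma~\ref{LemmaExt} by localization and then induct on $d = \dim A$. First, I would localize at $Q := P \cap A$: since $R = A[x_1,\ldots,x_n]$ is polynomial, $R_Q = A_Q[x_1,\ldots,x_n]$ and $M_P = (M_Q)_P$ is a localization of the $D(R_Q, A_Q)$-module $M_Q$. Thus I may assume $A$ is Noetherian local with maximal ideal $m = Q$ and $P \supset mR$, with $d = \dim A$ finite. The polynomial hypothesis is essential here; the analogous reduction fails for power series rings, which is why the statement in this generality is only available in the polynomial case.

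For the inductive step $d \geq 1$, following the template of Lemma~\ref{LemmaExt}, I would pick $y \in m$ that is a nonzero-divisor on $A$ and lies outside every minimal prime of $A$ of maximal dimension, so that $\dim A/yA = d - 1$. By prime avoidance this is possible whenever $\Depth A \geq 1$. The short exact sequences
\[
0 \to \Ann_M(y) \to M \xrightarrow{\cdot y} yM \to 0, \qquad 0 \to yM \to M \to M/yM \to 0
\]
exhibit $\Ann_M(y)$ and $M/yM$ as $D(R/yR, A/yA)$-modules via the surjection of differential-operator rings from Section~\ref{2}. Applying the inductive hypothesis over $A/yA$ yields $\Ext^\ell_{(R/yR)_P}$-vanishing beyond $(d-1) + \dim(\Supp_R M)$ for both of these modules. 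Since $y$ is a nonzero-divisor on $R$, the change-of-rings spectral sequence for the single element $y$ — with $\Ext^q_{R_P}(R_P/yR_P,-)$ concentrated in degrees $q = 0,1$ — shifts the vanishing by one degree, giving $\Ext^\ell_{R_P}$-vanishing beyond $d + \dim(\Supp_R M)$. The closing trick from Lemma~\ref{LemmaExt} — that the composition of connecting maps from the two short exact sequences equals multiplication by $y$, which annihilates the Ext because $y \in P$ — then forces the desired vanishing for $M$ itself.

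For the base case $d = 0$, where $A$ is Artinian local with residue field $K$, I would filter $M$ via a composition series $0 = I_0 \subset \cdots \subset I_r = A$ of $A$ as an $A$-module. Each $I_j M$ is a $D(R,A)$-submodule, since $D(R,A)$ is $A$-linear and the $I_j$ are $A$-ideals, and each successive quotient $I_{j+1}M/I_jM$ is annihilated by $m$, hence becomes a $D(R/mR, K)$-module over the polynomial ring $R/mR = K[x_1,\ldots,x_n]$. Lyubeznik's first theorem in \cite{LyuInjDim} then bounds the Ext of each quotient over $R/mR$ by $\dim(\Supp_R M)$; a change-of-rings argument through the generators of $m$, combined with the iterated long exact sequence of Ext associated to the filtration, lifts the bound to Ext over $R$.

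The main obstacle I anticipate is the inductive step when $\Depth A = 0$, since then every element of $m$ is a zero-divisor and the clean Koszul change-of-rings for a single element breaks down. I expect to resolve this by first quotienting out the socle of $A$ to reduce to positive depth, or by invoking the composition-series filtration from the base case to bypass the need for a nonzero-divisor. A secondary technical point is to execute the change of rings cleanly in the base case when the generators of $m$ do not form a regular sequence in $R$, which may require tracking the full spectral sequence rather than a direct-sum decomposition.
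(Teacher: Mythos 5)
Your opening reduction --- localizing at $Q=P\cap A$, so that $R_Q=A_Q[x_1,\ldots,x_n]$, $M_Q$ is a $D(R_Q,A_Q)$-module, $M_P=(M_Q)_P$, and $PR_Q\supset QR_Q$ --- is, in fact, the paper's entire proof: after this step the paper simply applies Lemma \ref{LemmaExt} to the local ring $A_Q$, together with $\Dim(A_Q)\leq\Dim(A)$ and $\Dim(\Supp_{R_Q}M_Q)\leq\Dim(\Supp_R M)$. The hypothesis that makes this legitimate, and which is tacit in the statement (it is how the proposition is used in Theorem \ref{TeoInj}), is that $A$ is \emph{regular}, so that $A_Q$ is regular local and Lemma \ref{LemmaExt} applies verbatim; no new induction is needed.

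The remainder of your plan --- rerunning the induction over an arbitrary Noetherian local base --- has a genuine gap, and the difficulties you flag (depth zero, and the change of rings from $R/mR$ to $R$ in the Artinian base case) are not merely technical: the statement is false for general Noetherian $A$. Take $A=K[\epsilon]/(\epsilon^2)$, $R=A[x]$, $M=R/\epsilon R$ (a $D(R,A)$-module, since every $A$-linear operator preserves $\epsilon R$), and $P=(\epsilon,x)R$. Then $\Dim(A)+\Dim(\Supp_R M)=1$, but $R_P$ is a one-dimensional Gorenstein local ring and $M_P\cong R_P/\epsilon R_P$ has infinite projective, hence infinite injective, dimension over $R_P$; by Bass's characterization of injective dimension for finitely generated modules, $\Ext^\ell_{R_P}(K_P,M_P)\neq 0$ for arbitrarily large $\ell$. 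This defeats both of your proposed patches: in this example $M$ is already an $R/mR$-module of injective dimension one over $R/mR=K[x]$, yet of infinite injective dimension over $R$, so the step ``lift the bound from $R/mR$ to $R$'' in your base case is exactly what breaks, and passing to $A/\mathrm{soc}(A)$ cannot help since $M$ need not be a module over it and the conclusion itself fails. Your inductive step for depth at least one (nonzerodivisor $y$, the two short exact sequences, the degree shift from the two-row change-of-rings spectral sequence, and the multiplication-by-$y$ trick) is sound, but it rests on a base case that is only true when the zero-dimensional fibre is a field, i.e., when $A$ is regular --- at which point you should simply quote Lemma \ref{LemmaExt} after your first paragraph, as the paper does.
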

\begin{proof}
Let $Q=P\cap A$.  Then, $PR_Q$ is a prime ideal in $R_Q$ that contains $QR_Q$. Since $R_Q=A_Q[x_1,\ldots,x_n],$ $M_Q$ is a
$D(R_Q,A_Q)$-module, and $(M_Q)_P=M_P$, we have that 
$\Ext^\ell_{R_P} (K_P,M_P)=0$ is zero for $\ell > \Dim(A)+\Dim(\Supp_R (M_P))$ by Lemma \ref{LemmaExt}. Hence,
$\Ext^\ell_{R_P} (K_P,M_P)=0$ is zero for $\ell > \Dim(A)+\Dim(\Supp_R (M)),$ because $\Dim(\Supp_R (M))\geq \Dim(\Supp_R (M_Q))$.
\end{proof}

\begin{proof}[Proof of Theorem \ref{TeoInj}] 
Let $0\to E^0\to E^1\to E^2\to\ldots$ be  the minimal free resolution of $M.$ We know that every injective module $E$ is isomorphic to a direct sum 
of $E(R/P),$ where $P$ is a prime ideal and  $E(R/P)$ denotes the injective hull of $R/P.$ 
The number of copies of $E(R/P)$ in $E^j$ is given by $\dim_{R_P/PR_P}\Ext^j_{R_P}(R_P/PR_P,M_P),$ the $j$-th Bass number
of $M$ with respect to $P.$ Therefore, the injective dimension of  $M$ is bounded by a number $B$ if and only if 
$\Ext^j_{R_P}(R_P/PR_P,M_P)=0$ for every integer $j> B$ and prime ideal $P\subset R.$

If $M$ is supported only at $mR,$ then 
$M_P=0$ for every prime ideal $P$ such that $mR\not\subset P.$ Therefore, it suffices to prove that $\Ext^\ell_R (K_P,M_P)=0$ for $\ell > \Dim(A)+\Dim(\Supp_R (M))$ and prime ideals that contain $mR,$ which follows from Lemma \ref{LemmaExt}.

We now assume that  $R=A[x_1,\ldots, x_n]$ and take $M$ to be any $D(R,S)$-module.
In this case, we can reduce the the situation in which $M$ is supported only at $mR$ to compute the Bass numbers as it was done in 
Proposition \ref{PropInjDimPol}.
Therefore,  $\Ext^\ell_R (K_P,M_P)=0$ for every $\ell > \Dim(A)+\Dim(\Supp_R (M))$ and  every prime
ideal $P\subset R$ by Proposition \ref{PropInjDimPol}. 
\end{proof}
Now we give an example, inspired by work of Zhou \cite{ZhouEx}, that shows that the bound presented in Theorem  \ref{TeoInj} is sharp.
\begin{Ex}\label{ExSharp}
Let $(A,m,K)$ be a regular local ring of dimension $d$.
Let $R$ be either $A[x_1,\ldots,x_n]$ or $A[[x_1,\ldots,x_n]]$.
Let $M=R/mR.$ Following the proof of Lemma \ref{LemmaExt}, it can be shown that
$$
\InjDim_R (M)=\dim(A)+\dim\Supp_R(M)=d+n.
$$
\end{Ex}

\section*{Acknowledgments}
I would like to thank Mel Hochster for his invaluable comments and suggestions.  
I also wish to thank Gennady Lyubeznik for his helpful comments; in particular, 
for suggesting a generalization of Lemma \ref{LemmaAQFL} to every functor $\cT$ instead of only for local cohomology modules.  
I would like to thank Wenliang Zhang for pointing out that the hypothesis of a Cohen-Macaulay ring, previously assumed, was not needed in several results; 
in particular, in Proposition \ref{PropAssDO}.
I also wish to thank him for suggesting the statement and the ideas behind the proof of Corollary \ref{CorAssSeries}.
I thank the referee for very helpful comments.
Thanks are also due to the National Council of Science and Technology of Mexico by its support through grant $210916.$

\bibliographystyle{alpha}
\bibliography{References}

\begin{thebibliography}{MNM91}

\bibitem[Gro65]{Gro}
A.~Grothendieck.
\newblock \'{E}l\'ements de g\'eom\'etrie alg\'ebrique. {IV}. \'{E}tude locale
  des sch\'emas et des morphismes de sch\'emas. {II}.
\newblock {\em Inst. Hautes \'Etudes Sci. Publ. Math.}, (24):231, 1965.

\bibitem[Gro67]{EGA}
A.~Grothendieck.
\newblock \'{E}l{\'e}ments de g{\'e}om{\'e}trie alg{\'e}brique. {IV}. \'{E}tude
  locale des sch{\'e}mas et des morphismes de sch{\'e}mas {IV}.
\newblock {\em Inst. Hautes {\'E}tudes Sci. Publ. Math.}, (32):361, 1967.

\bibitem[Har66]{RD}
Robin Hartshorne.
\newblock {\em Residues and duality}.
\newblock Lecture notes of a seminar on the work of A. Grothendieck, given at
  Harvard 1963/64. With an appendix by P. Deligne. Lecture Notes in
  Mathematics, No. 20. Springer-Verlag, Berlin, 1966.

\bibitem[Har70]{Ha}
Robin Hartshorne.
\newblock Affine duality and cofiniteness.
\newblock {\em Invent. Math.}, 9:145--164, 1969/1970.

\bibitem[HS93]{Huneke}
Craig Huneke and Rodney~Y. Sharp.
\newblock Bass numbers of local cohomology modules.
\newblock {\em Trans. Amer. Math. Soc.}, 339(2):765--779, 1993.

\bibitem[Hsi12]{Hsiao}
Jen-Chieh Hsiao.
\newblock {$D$}-module structure of local cohomology modules of toric algebras.
\newblock {\em Trans. Amer. Math. Soc.}, 364(5):2461--2478, 2012.

\bibitem[Kat02]{Moty}
Mordechai Katzman.
\newblock An example of an infinite set of associated primes of a local
  cohomology module.
\newblock {\em J. Algebra}, 252(1):161--166, 2002.

\bibitem[Lyu93]{LyuDMod}
Gennady Lyubeznik.
\newblock Finiteness properties of local cohomology modules (an application of
  {$D$}-modules to commutative algebra).
\newblock {\em Invent. Math.}, 113(1):41--55, 1993.

\bibitem[Lyu97]{LyuFMod}
Gennady Lyubeznik.
\newblock {$F$}-modules: applications to local cohomology and {$D$}-modules in
  characteristic {$p>0$}.
\newblock {\em J. Reine Angew. Math.}, 491:65--130, 1997.

\bibitem[Lyu00a]{LyuFreeChar}
Gennady Lyubeznik.
\newblock Finiteness properties of local cohomology modules: a
  characteristic-free approach.
\newblock {\em J. Pure Appl. Algebra}, 151(1):43--50, 2000.

\bibitem[Lyu00b]{LyuUMC}
Gennady Lyubeznik.
\newblock Finiteness properties of local cohomology modules for regular local
  rings of mixed characteristic: the unramified case.
\newblock {\em Comm. Algebra}, 28(12):5867--5882, 2000.
\newblock Special issue in honor of Robin Hartshorne.

\bibitem[Lyu00c]{LyuInjDim}
Gennady Lyubeznik.
\newblock Injective dimension of {$D$}-modules: a characteristic-free approach.
\newblock {\em J. Pure Appl. Algebra}, 149(2):205--212, 2000.

\bibitem[MNM91]{MeNa}
Z.~Mebkhout and L.~Narv{\'a}ez-Macarro.
\newblock La th{\`e}orie du polyn{\^{o}}me de {B}ernstein-{S}ato pour les
  alg{\`e}bres de {T}ate et de {D}work-{M}onsky-{W}ashnitzer.
\newblock {\em Ann. Sci. {\'E}cole Norm. Sup. (4)}, 24(2):227--256, 1991.

\bibitem[NB12]{NunezDS}
Luis N{\'u}{\~n}ez-Betancourt.
\newblock Local cohomology properties of direct summands.
\newblock {\em J. Pure Appl. Algebra}, 216(10):2137--2140, 2012.

\bibitem[NB13]{Nunez}
Luis N{\'u}{\~n}ez-Betancourt.
\newblock On certain rings of differentiable type and finiteness properties of
  local cohomology.
\newblock {\em J. Algebra}, 379:1--10, 2013.

\bibitem[Sin00]{Anurag}
Anurag~K. Singh.
\newblock {$p$}-torsion elements in local cohomology modules.
\newblock {\em Math. Res. Lett.}, 7(2-3):165--176, 2000.

\bibitem[SS04]{S-S}
Anurag~K. Singh and Irena Swanson.
\newblock Associated primes of local cohomology modules and of {F}robenius
  powers.
\newblock {\em Int. Math. Res. Not.}, (33):1703--1733, 2004.

\bibitem[Zho96]{ZhouEx}
Caijun Zhou.
\newblock Functors of the category of abelian sheaves on regular affine scheme.
\newblock {\em Acta Math. Sinica (N.S.)}, 12(4):413--414, 1996.
\newblock A Chinese summary appears in Acta Math. Sinica {{\bf{4}}0} (1997),
  no. 1, 160.

\bibitem[Zho98]{Zhou}
Caijun Zhou.
\newblock Higher derivations and local cohomology modules.
\newblock {\em J. Algebra}, 201(2):363--372, 1998.

\end{thebibliography}

\end{document}